\newtheorem{thm}{Theorem}[section]
\newtheorem{Def}[thm]{Definition}
\newtheorem{Rem}[thm]{Remark}
\newtheorem{Cor}[thm]{Corollary}
\newtheorem{Pro}[thm]{Proposition}
\newtheorem{prob}[thm]{Problem}
\newtheorem{lemma}[thm]{Lemma}
\newcommand{\bdfn}{\begin{Def} \rm}
\newcommand{\edfn}{\end{Def}}
\newcommand{\tfae}{the following are equivalent}
\newcommand{\ra}{\rightarrow}
\newcommand{\Ra}{\Rightarrow}
\newcommand{\es}{\emptyset}
\newcommand{\ci}{\subseteq}
\newcommand{\al}{\alpha}
\newcommand{\de}{\delta}
\newcommand{\e}{\varepsilon}
\newcommand{\vp}{\varphi}
\newcommand{\la}{\lambda}
\newcommand{\si}{\sigma}
\newcommand{\Si}{\Sigma}
\newcommand{\ga}{\gamma}
\newcommand{\Ga}{\Gamma}
\newcommand{\mb}{\mathbb}
\newcommand{\mc}{\mathcal}
\newcommand{\mf}{\mathfrak}
\newcommand{\mr}{\mathscr}
\newcommand{\sm}{\setminus}
\newcommand{\iy}{\infty}
\newcommand{\Om}{\Omega}
\newcommand{\tr}{\textrm}
\newcommand{\ov}{\overline}
\newcommand{\beqa}{\begin{eqnarray*}}
\newcommand{\eeqa}{\end{eqnarray*}}
\newcommand{\vertiii}[1]{{\left\vert\kern-0.25ex\left\vert\kern-0.25ex\left\vert #1 
    \right\vert\kern-0.25ex\right\vert\kern-0.25ex\right\vert}}
\newcounter{cnt1}
\newcounter{cnt2}
\newcounter{cnt3}
\newcounter{cnt4}
\newcommand{\blr}{\begin{list}{$($\roman{cnt1}$)$} {\usecounter{cnt1}
\setlength{\topsep}{0pt} \setlength{\itemsep}{0pt}}}
\newcommand{\blR}{\begin{list}{\Roman{cnt4}.\ } {\usecounter{cnt4}
\setlength{\topsep}{0pt} \setlength{\itemsep}{0pt}}}
\newcommand{\bla}{\begin{list}{$(\alph{cnt2})$} {\usecounter{cnt2}
\setlength{\topsep}{0pt} \setlength{\itemsep}{0pt}}}
\newcommand{\bln}{\begin{list}{$($\arabic{cnt3}$)$} {\usecounter{cnt3}
\setlength{\topsep}{0pt} \setlength{\itemsep}{0pt}}}
\newcommand{\el}{\end{list}}
\begin{document}
\title[Generalized centers in Banach spaces]{A study on various generalizations of Generalized centers $\bm{(GC)}$ in Banach spaces}
\author[Das]{Syamantak Das}
\author[Paul]{Tanmoy Paul}
\address{Dept. of Mathematics\\
Indian Institute of Technology Hyderabad\\
India}
\email{ma20resch11006@iith.ac.in \& tanmoy@math.iith.ac.in}
\subjclass[2000]{Primary 41A28, 41A65 Secondary 46B20 41A50 \hfill \textbf{\today} }
\keywords{Chebyshev center,
	generalized center, weighted Chebyshev center, central subspace, $n.X.I.P$.}

\begin{abstract}
In [{\em Generalized centers of finite sets in Banach spaces}, Acta Math. Univ. Comenian. (N.S.) {\bf 66}(1) (1997), 83--115], Vesel\'{y} developed the idea of generalized centers for finite sets in Banach spaces. In this work, we explore the concept of {\it restricted $\mr{F}$-center property} for a triplet $(X,Y,\mc{F}(X))$, where $Y$ is a subspace of a Banach space $X$ and $\mc{F}(X)$ is the family of finite subsets of $X$. In addition, we generalize the analysis to include all closed, bounded subsets of $X$. Similar to how Lindenstrauss characterized $n.2.I.P.$, we characterize $n.X.I.P.$. So, it is possible to figure out that $Y$ has $n.X.I.P.$ in $X$ for all natural numbers $n$ if and only if $\tr{rad}_Y(F)=\tr{rad}_X(F)$ for all finite subsets $F$ of $Y$. It then turns out that, for all continuous, monotone functions $f$, the $f$-radii viz. $\tr{rad}_Y^f(F),\tr{rad}_X^f(F)$ are same whenever the generalized radii viz. $\tr{rad}_Y(F), \tr{rad}_X(F)$ are also same, for all finite subsets $F$ of $Y$. We establish a variety of characterizations of central subspaces of Banach spaces. With reference to an appropriate subfamily of closed and bounded subsets, it appears that a number of function spaces and subspaces exhibit the restricted weighted Chebyshev center property. 
\end{abstract}

\maketitle

\section{Introduction}
\subsection{Objectives:~}
In this study, we focus on the minimization problem $\inf_{y\in Y}f(\|z_1-y\|,\ldots,\|z_n-y\|)$ within the framework of Banach spaces $X$ and its closed subspaces $Y$. The problem involves finding the minimum value of a given function $f:\mb{R}^n_{\geq 0}\ra\mb{R}_{\geq 0}$, which is continuous, monotone (point wise), coercive,   corresponding to a finite subset $\{z_1,z_2,\ldots,z_n\}$ of $X$. Additionally, we explore variations of this problem in Banach spaces. The outcomes in \cite{LV} make it clear that this phenomenon is connected to the intersection properties of balls in Banach spaces. We extend the concept of $f$-centers, as introduced in the work by Vesel\'{y} in \cite{LV}, to encompass all closed and bounded subsets of the space $X$.  

In \cite{LV}, the author observed that the generalized radius for a finite set in a Banach space remains same when it is considered in the bidual of the space. In this paper, we investigate the following problem.

\begin{prob}\label{Q1}
Let $Y$ be a subspace of $X$. For every finite subset $F$ of $Y$, under what necessary and sufficient conditions do the generalized radii of $F$ remain the same when it is viewed as a subset of both $X$ and $Y$? 
\end{prob}

Additionally, we aim to study various properties viz. {\bf $\mr{F}$-rcp}, {\bf wrcp}, $\mc{A}-C$-subspace, $(GC)$, $\mc{A}$-IP (see Definition~\ref{D1}, \ref{D2}) in various function spaces and their subspaces.

\subsection{Prerequisites:~}
We list some standard notations used in this study: $X$ denotes a Banach space, and by subspace, we indicate a closed linear subspace.  $B_X$ and $S_X$ represent the closed unit ball and the unit sphere of $X$, respectively. For $x\in X$ and $r>0$, $B_X(x,r)$ denotes the closed ball in $X$, centered at $x$ and radius $r$. When there is no chance of confusion, we simply write $B(x,r)$ for the closed ball in $X$. We consider an element $x\in X$ to be canonically embedded in $X^{**}$. $\mc{F}(X),\mc{C}(X), \mc{K}(X), \mc{WC}(X), \mc{B}(X)$, and $\mc{P}(X)$ represent the set of all nonempty finite, closed and convex,  compact,  weakly compact, closed and bounded, and power set of $X$, respectively. It is assumed that the real line corresponds to the scalar field for the spaces. For a nonempty subset $B$ of $X$, we consider an ordered tuple $(t_b)_{b\in B}$, indexed by the set $B$ itself. We allow the repetitions, and by the well-ordering principle, such an ordering exists always.
We consider coordinate-wise ordering on a subclass of $\Pi_{b\in B}\mb{R}_{\geq 0}$, viz. $\ell_\iy(B)=\{\varphi:B\ra\mb{R}_{\geq 0}: \sup_{b\in B}\varphi(b)<\iy\}$, defined as: for $\varphi_1,\varphi_2\in\ell_\iy(B)$, $\varphi_1\leq\varphi_2$ if and only if $\varphi_1(b)\leq\varphi_2(b)$ for all $b\in B$.
For a subset $B$ of $X$, we consider $\ell_\iy(B)$ to be endowed with the supremum norm.

\bdfn
For a subset $B$ of $X$, and a function $f:\ell_\iy(B)\ra\mb{R}_{\geq 0}$, we call:
\bla
\item $f$ is {\it monotone} if for $\varphi_1,\varphi_2\in\ell_\iy(B)$, $\vp_1\leq\vp_2$ implies $f(\vp_1)\leq f(\vp_2)$.
\item $f$ is {\it strictly monotone} if for $\varphi_1,\varphi_2\in\ell_\iy(B)$, $\vp_1\leq\vp_2$ and $\vp_1\neq \vp_2$ imply $f(\vp_1)< f(\vp_2)$.
\item $f$ is {\it coercive} if $f(\vp)\ra\iy$ as $\|\vp\|_\iy\ra\iy$.
\el
\edfn

\bdfn\label{D1}
	\bla
	\item Let $Y$ be a subspace of $X$, $F\in\mc{B}(X)$, and a function $f: \ell_\iy(F)\ra\mb{R}_{\geq 0}$. We define $r_f(x,F)=f(\left(\|x-a\|\right)_{a\in F})$ and $\tr{rad}_Y^f(F)=\inf_{y\in Y} r_f(y,F)$ for $x\in X$. The collection of all points in $Y$ where the infimum that defines $\tr{rad}_Y^f(F)$ is achieving is referred to as the {\it restricted $f$-centers of $F$ in $Y$} and is denoted by $\tr{Cent}^f_Y(F)$. 
 That is, $\tr{Cent}^f_Y(F)=\{y\in Y:r_f(y,F)=\tr{rad}_Y^f(F)\}$.
	When $Y=X$, then the restricted $f$-centers are called the $f$-centers of $F$ in $X$. If the set of $f$-centers (restricted $f$-centers) is nonempty, we say that $X (Y )$ admits $f$-centers (restricted $f$-centers) for $F$.
	\item When $f$ is of the form $f(a)=\sup_{t\in F}\rho(t)a(t)$, where $\rho=(\rho(t))_{t\in F}\in\ell_\iy(F)$, then $r_f(x,F), \tr{rad}_Y^f(F)$, and $\tr{Cent}_Y^f(F)$ are rewritten as $r_\rho(x,F), \tr{rad}_Y^\rho(F)$, and $\tr{Cent}_Y^\rho(F)$, respectively. In this case, we refer to the $f$-centers as the {\it restricted weighted Chebyshev centers}. In this case, we call $Y$ admits restricted weighted Chebyshev center for $F$ for weights $(\rho(t))_{t\in F}$. When $\rho(t)=1$ for all $t$, we denote the previous quantities by $r(x,F), \tr{rad}_Y(F)$ and $\tr{Cent}_Y(F)$ respectively.
	\item For a subspace $Y$ of $X$, $\mf{F}\ci \mc{B}(X)$ and a family of functions $\mathscr{F}$ consisting of $f:\ell_\iy(F)\ra\mb{R}_{\geq 0}$ for all $F\in\mf{F}$,
	the triplet $(X,Y,\mf{F})$ is said to have the {\it restricted $\mr{F}$-center property} ( {\bf $\mr{F}$-rcp} in short) if for all $F\in\mf{F}$ and $f\in\mr{F}$, we have $\tr{Cent}_Y^f(F)\neq\es$.
	\item For a subspace $Y$ of $X$ and $\mf{F}\ci \mc{B}(X)$, the triplet $(X,Y,\mf{F})$ is said to have {\it restricted weighted Chebyshev center property} ({\bf wrcp} in short) if for all $F\in\mf{F}$ and bounded weights $\rho:F\ra \mb{R}_{\geq 0}$, $\tr{Cent}_Y^\rho(F)\neq\es$.
	\el
\edfn

We state the following result based on \cite{LV}.
\begin{thm}\label{6}
For a Banach space $X$ and any $n$-tuple $(a_1,\cdots,a_n)\in \mc{F}(X)$, the following are equivalent:
\bla
\item If $r_1,\ldots,r_n>0$ then $\cap_{i=1}^nB_{X^{**}}(a_i,r_i)\neq\es$ implies $\cap_{i=1}^nB_X(a_i,r_i)\neq\es$.
\item $X$ admits weighted Chebyshev centers for $(a_1,\cdots,a_n)$ for all weights $\rho_1,\cdots,\rho_n>0$.
\item $X$ admits $f$-centers for $(a_1,\cdots,a_n)$ for each continuous monotone coercive function $f$ on $\mathbb{R}^n_{\geq0}$.
\el
\end{thm}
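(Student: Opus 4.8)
The plan is to prove the cycle $(a)\Ra(c)\Ra(b)\Ra(a)$, resting everything on two observations about the passage to the bidual $X^{**}$. Throughout write $F=\{a_1,\dots,a_n\}$ and, extending the notation of Definition~\ref{D1} to $\xi\in X^{**}$, set $r_f(\xi,F)=f\big((\|\xi-a_i\|)_{i=1}^n\big)$ and $\tr{rad}^f_{X^{**}}(F)=\inf_{\xi\in X^{**}}r_f(\xi,F)$. The first observation is that $X^{**}$ always admits $f$-centers for $F$ when $f$ is continuous, monotone and coercive. Indeed, $\xi\mapsto r_f(\xi,F)$ is weak$^*$-lower semicontinuous: each $\xi\mapsto\|\xi-a_i\|$ is weak$^*$-lower semicontinuous, and precomposing a continuous monotone $f$ with lower semicontinuous coordinates preserves lower semicontinuity (pass to a subnet along which every distance converges in $[0,\iy]$ and use monotonicity in the limit). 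Coercivity forces each nonempty sublevel set $\{\xi:r_f(\xi,F)\le c\}$ to be norm-bounded, hence weak$^*$-compact by Banach--Alaoglu, so the infimum is attained.

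The second, and crucial, observation is the radius equality $\tr{rad}^f_X(F)=\tr{rad}^f_{X^{**}}(F)$ for every continuous monotone $f$. The inequality $\tr{rad}^f_{X^{**}}(F)\le\tr{rad}^f_X(F)$ is immediate since $X\ci X^{**}$. For the reverse I would invoke the Principle of Local Reflexivity: given $\xi\in X^{**}$ with $r_f(\xi,F)$ close to $\tr{rad}^f_{X^{**}}(F)$ and $\e>0$, apply it to the finite-dimensional subspace $E=\tr{span}\{\xi,a_1,\dots,a_n\}\ci X^{**}$ to obtain an operator $T\colon E\ra X$ that fixes each $a_i$ (as $a_i\in X$) and satisfies $\|Te\|\le(1+\e)\|e\|$ on $E$. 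Then $\|T\xi-a_i\|=\|T(\xi-a_i)\|\le(1+\e)\|\xi-a_i\|$, so by monotonicity and then continuity of $f$ we get $r_f(T\xi,F)\le f\big(((1+\e)\|\xi-a_i\|)_i\big)\to r_f(\xi,F)$ as $\e\downarrow0$, yielding $\tr{rad}^f_X(F)\le\tr{rad}^f_{X^{**}}(F)$.

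With these in hand the implications are short. For $(c)\Ra(b)$, the weighted problem for weights $\rho_i>0$ is the $f$-center problem for $f((t_i)_i)=\max_i\rho_i t_i$, which is continuous, monotone and coercive, so $(b)$ is a special case of $(c)$. For $(b)\Ra(a)$, if $r_i>0$ and $\xi\in\cap_{i}B_{X^{**}}(a_i,r_i)$, set $\rho_i=1/r_i$; then $\tr{rad}^\rho_{X^{**}}(F)\le1$, so by the radius equality $\tr{rad}^\rho_X(F)\le1$, and a weighted Chebyshev center $y\in X$ supplied by $(b)$ satisfies $\rho_i\|y-a_i\|\le1$, i.e. $y\in\cap_i B_X(a_i,r_i)$. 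For $(a)\Ra(c)$, take an $f$-center $\xi\in X^{**}$ from the first observation and put $t_i=\|\xi-a_i\|$; if some $t_i=0$ then $\xi=a_i\in X$ is already an $f$-center, and otherwise all $t_i>0$, so $(a)$ yields $y\in\cap_i B_X(a_i,t_i)$, whence $r_f(y,F)\le f((t_i)_i)=\tr{rad}^f_{X^{**}}(F)=\tr{rad}^f_X(F)$, forcing $y$ to be an $f$-center in $X$.

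The main obstacle is the radius equality. A naive weak$^*$-approximation of $\xi$ via Goldstine's theorem controls the norm only lower-semicontinuously and may inflate the distances $\|\xi-a_i\|$, so it cannot bound $\tr{rad}^f_X(F)$ from above. The finite-dimensional, nearly-isometric operators furnished by local reflexivity, which moreover fix the centers $a_i$, are precisely what is needed to transfer a near-optimal bidual point back into $X$ without loss; once this is secured, the conversion between ball intersections and weighted or $f$-radii is purely formal.
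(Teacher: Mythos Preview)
The paper does not actually prove this theorem: it is stated as a result ``based on \cite{LV}'' and then used. The closest thing to a proof in the paper is Theorem~\ref{T8}, which runs the same cycle $(a)\Ra(c)\Ra(b)\Ra(a)$ in the relative setting $Y\subseteq X$, but there the two key preliminary facts---that the ambient space admits $f$-centers and that the $f$-radii coincide---are taken as hypotheses rather than proved.

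Your argument is correct and supplies precisely the missing ingredients for the bidual case. The existence of $f$-centers in $X^{**}$ via weak$^*$-compactness and coercivity is standard and your sketch is sound (on a minimizing net everything stays bounded, so the passage to subnets with finite limits is unproblematic). For the radius equality $\tr{rad}^f_X(F)=\tr{rad}^f_{X^{**}}(F)$ you invoke the Principle of Local Reflexivity; this is a clean choice, and in fact it is the mechanism behind the statement (used implicitly in \cite{LV} and in Remark~\ref{R1}) that $X$ is an ideal in $X^{**}$, hence has $n.X^{**}.I.P.$ for all $n$, which is exactly the hypothesis the paper's Theorem~\ref{T10} would need to deduce the same radius equality. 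So your route and the paper's route (via Theorem~\ref{T10}/\ref{T8} specialized to $X\subseteq X^{**}$) are really the same argument phrased differently. Your handling of the degenerate case $t_i=0$ in $(a)\Ra(c)$ is a nice touch that the paper's proof of Theorem~\ref{T8} glosses over.
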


A Banach space {\it $X$ belongs to the class $(GC)$} if  for every ordered $n$-tuple $(a_1,\ldots,a_n)\in \mc{F}(X)$ it satisfies any one of the equivalent conditions stated in Theorem~\ref{6}. It is denoted by $X\in (GC)$.

Bandyopadhyay and Rao in \cite{PB} introduced the notion of {\it central subspace}, which is a generalization of class $(GC)$ given in \cite{LV}. 

\bdfn\label{D3}
A subspace $Y$ of $X$ is said to be {\it a central subspace of $X$} if for any finite family of balls in $X$ with centers in $Y$ which intersect in $X$, also intersect in $Y$.
\edfn

Bandyopadhyay and Dutta in \cite{SD} further generalized the notion of central subspace to {\it $\mc{A}-C$-subspace}, for a subfamily $\mc{A}$ of $\mc{P}(X)$.

\bdfn\cite{SD}\label{D2}
Let $Y$ be a subspace of Banach space $X$  and $\mc{A}$ be a family of subsets of $Y$. 
\bla
\item $Y$ is said to be an {\it almost $\mc{A}-C$-subspace of $X$} if for all $x\in X$, $A\in\mc{A}$ and $\e>0$, there exists $y\in Y$ such that $\|y-a\|\leq\|x-a\|+\e$ for all $a\in A$.
\item Y is said to be a {\it $\mc{A}-C$-subspace of $X$} if we can take $\e=0$ in $(a)$.
\item If $\mc{A}$ is a family of subsets of $X$, then it is referred to that {\it $X$ has (almost) $\mc{A}$-IP}, if $X$ is an (almost) $\mc{A}-C$-subspace of $X^{**}$.
\el
\edfn

Note, $X\in (GC)$ if it has $\mc{F}(X)$-IP. Moreover,
$\mc{F}(Y)-C$-subspaces are called central subspaces in \cite{PB}.  

\bdfn
\bla
\item \cite{AL} A subspace $Y$ of $X$ is said to have {\it $n.X.I.P.$ in $X$} if for any $n$ closed balls $\{B_X(a_i,r_i)\}_{i=1}^n$ having centers in $Y$ and $\cap_{i=1}^nB_X(a_i,r_i)\neq\es$, then $\cap_{i=1}^nB_Y(a_i,r_i+\e)\neq\es$ for all $\e>0$.
\item \cite{AL} A subspace $Y$ of $X$ is said to be an {\it ideal} of $X$ if there exists a projection $P:X^*\ra X^*$ such that $\|P\|=1$ and $\ker (P)=Y^\perp$.
\el
\edfn

\begin{Rem}\label{R1}
	\bla
\item The subspaces $Y$ which have $n.X.I.P.$ for all $n$ are precisely those which are almost $\mc{F}(Y)-C$-subspaces of $X$. In general, $n.X.I.P.$ does not imply $k.X.I.P.$ for $k>n$, although under certain assumptions, $n.X.I.P.$ for all $n$ is equivalent to $3.X.I.P.$. 
\item A subspace $Y$ of $X$ which is known to be an ideal in $X$ also satisfies $n.X.I.P.$ (see \cite[Proposition~3.2]{AL}). 
\item A subspace that is a range of a norm-$1$ projection is clearly a central subspace and also an ideal. 
   \el
\end{Rem}

For a subspace $Y$ of $X$, we introduce the following notions here.

\bdfn
A subspace $Y$ of $X$ is said to have {\it restricted $n.X.I.P.$ in $X$} (be a {\it restricted central subspace of $X$}) if for any $n$ closed balls $\{B_X(a_i,r)\}_{i=1}^n$ having centers in $Y$ and $\cap_{i=1}^nB_X(a_i,r)\neq\es$, then $\cap_{i=1}^nB_Y(a_i,r+\e)\neq\es$ for all $\e>0$ ($\cap_{i=1}^n B_Y(a_i,r)\neq\es$).
\edfn

For a Banach space $X$, and a finite measure space $(\Om,\Si,\mu)$ and $1\leq p<\iy$, $L_p(\Si,X)$ represents the space of all {\it $p$-Bochner integrable functions}, which are precisely $f:\Omega\to X$ strongly measurable and $\int_\Omega \|f(t)\|^pd\mu (t)<\iy$. $\|f\|_p:=\left(\int_\Omega \|f(t)\|^pd\mu (t)\right)^{1/p}$ defines a norm on $L_p(\Si,X)$, which makes $L_p(\Si,X)$ a Banach space. $L_\iy(\Si,X)$ represents the set of all {\it essentially bounded functions} $f:\Omega\ra X$, which are strongly measurable. \cite[Ch. 2]{D} is a standard reference for these spaces and all the properties used in this article.

Let us recall from \cite[Ch. 5]{D} that for a  sub $\si$-algebra $\Si'\ci \Si$, by considering $L_p(\Si',X)$ as a subspace of $L_p(\Si,X)$, $1\leq p<\iy$, the  {\it conditional expectation operator} is a mapping  $E:L_p(\Si,X)\ra L_p(\Si',X)$ such that $E(f)=g$, where $\int_Bfd\mu=\int_Bgd\mu$ for all $B\in\Si'$ indicates a linear projection of norm-$1$. 

\bdfn\cite{H}
\bla
\item A bounded linear projection $P:X\ra X$ is said to be an {\it $L$-projection} if 
$\|x\|=\|Px\|+\|x-Px\|$ for all $x\in X$.

\item A Banach space $X$ is said to be {\it $L$-embedded} if $X$, under its canonical image in $X^{**}$, is the range of an $L$-projection on $X^{**}$. 

\item A closed subspace $J\ci X$ is an {\it $M$-ideal} in $X$ if $J^\perp$ is the range of an $L$-projection on $X^*$(\cite{H}). 
\el
\edfn

Reference \cite[Ch. 4]{H} provides examples and other properties of $L$-embedded spaces. We call a subspace $Y$ of $X$ is {\it $1$-complemented} if $Y$ is a range of a norm-$1$ projection from $X$.
If $Y$ is a $1$-complemented subspace of an $L$-embedded space $X$, then $Y$ is also $L$-embedded (see \cite[Theorem~IV.1.5.]{H}). For a sub $\si$-algebra $\Si'\ci\Si$, due to the conditional expectation $E:L_1(\Si,X)  \ra L_1(\Si',X)$, $L_1(\Si',X)$ is $L$-embedded if $L_1(\Si,X)$ is so.

A Banach space $X$ is said to be a {\it Lindenstrauss space} (or {\it $L_1$-predual}) if $X^*\cong L_1(\mu)$ for some measure $\mu$. In \cite{JL1}, Lindenstrauss characterizes these spaces as Banach spaces, where any collection of pairwise intersecting closed balls whose centers form a compact set has a nonempty intersection.

An ideal of a Lindenstrauss space is also a Lindenstrauss space and, hence, a central subspace (see \cite[Lemma~10, Theorem~15]{CR}).

In \cite{LV}, Vesel\'{y} derived that for spaces $X$ with the Radon-Nikod\'{y}m-property (RNP in short) and $1$-complemented in its bidual, $L_p(\mu,X)\in (GC)$, for $1\leq p<\iy$. Additionally, if $X$ is a dual space that is strictly convex and has $(w^*K)$, then $C_b(T,X)\in (GC)$ for a Hausdorff space $T$.

We refer to the articles \cite{SD, LV} for various  examples of the spaces that are  discussed in this paper.

\subsection{Observations:~} 
In this subsection, we give an overview of our observations.
Suppose that, $Y$ has $n.X.I.P.$ in $X$ and $F\in\mc{F}(Y)$, where $card (F)=n$ and $X$ admits weighted Chebyshev centers for $F$. In section~2, it is demonstrated that $Y$ admits restricted $f$-centers for $F$, where $card (F)=n$, for all continuous, monotone functions $f:\mb{R}^n_{\geq0}\ra\mb{R}_{\geq0}$,  if and only if for any collection of $n$ closed balls with centers in $F$ having a nonempty intersection in $X$ also intersect in $Y$. We obtain a similar characterization to that \cite[Theorem~2.7]{LV} for the family of closed and bounded subsets of $Y$.

Problem~\ref{Q1} is answered in Theorem~\ref{T13}, which extends a 
characterization to the notion $n.X.I.P.$ in Banach spaces. Consequently, we obtain various characterizations for central subspaces in Banach spaces, as stated in Theorem~\ref{T3} and Theorem~\ref{T9}.
It is easy to observe that if $Y$ is a subspace of $X$ and if there exists $P:X\ra Y$ onto, where $P(\la x+y)=\la P(x)+y$, for $x\in X, y\in Y$ and scalar $\la$, $\|P(x)\|\leq \|x\|$ for all $x$, a {\it quasi-linear projection} $P:X\ra X$, then for any $F\in\mc{B}(Y)$, $\tr{rad}_Y(F)=\tr{rad}_X(F)$. Vesel\'{y}'s example in \cite[Pg.9]{LV} ensures there does not exist any quasi-linear projection $P:\ell_\iy\ra c_0$. We do not know the solution to the question posed in Problem~\ref{Q1} for closed bounded subsets, with the exception of some trivial circumstances, such as when $Y$ is $1$-complemented in $X$ or, more generally, a range of a quasi-linear projection. The requirement for $Y$ being a central subspace can be more simply expressed when $Y\in (GC)$ is observed. 

Let us give an example of a space $X$ such that $L_1(\mu,X)\notin (GC)$. Let $f\in \ell_1$ be such that support of $f$ is infinite and $2\|f\|_\iy>\|f\|_1$. Now identify $f$ as a linear functional on $c_0$ and suppose that $X=\ker (f)$. Thus, $X\notin (GC)$ (see \cite{LV1}). Since $X$ is $1$-complemented in $L_1(\mu,X)$, $L_1(\mu,X)\notin (GC)$.

In section~3, we investigate the restricted $\mr{F}$-center property for various triplets $(X, Y, \mf{F})$, where $Y$ represents a subspace of $X$ and $\mf{F}\ci \mc{B}(X)$. It is evident that when a given subspace $Y\ci X$, $(X,Y,\mc{F}(X))$ possesses {\bf wrcp}, then $Y\in (GC)$. On the other hand, from Theorem~\ref{T12} one can conclude in the above instances that $Y$ is a central subspace of $X$ whenever $X$ admits $f$-centers for $F\in \mc{F}(Y)$, for all continuous, monotone functions $f:\ell_\iy(F)\ra\mb{R}_{\geq 0}$.

Methods for dealing with the aforementioned ideas, including $(GC)$, central subspace, $n.X.I.P.$, $\mc{A}-C$-subspace are adapted from \cite{SD,PB,T,LV1,LV}. In section~ 3, we use certain measure-theoretic tools from \cite{D} to derive our observations. 

\section{$\mc{A}-C$-subspaces of Banach spaces}\label{13}
The following is obtained by employing justifications similar to those given in \cite[Theorem~2.6]{LV}. For an $F\in \mc{F}(X)$, by $card (F)$, we mean the cardinality of the ordered tuple.

\begin{thm}\label{T10}
Let $Y$ be a subspace of $X$ and $n\in\mb{N}$. Suppose $Y$ has $n.X.I.P.$ in $X$, and $f:\mb{R}^n_{\geq0}\ra\mb{R}_{\geq0}$ be a continuous, monotone function. Then, for all $F\in\mc{F}(Y)$ with $card (F)=n$, we have $\emph{rad}_Y^f(F)=\emph{rad}_X^f(F)$.
\end{thm}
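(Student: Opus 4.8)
The plan is to prove the two inequalities $\tr{rad}_X^f(F)\leq\tr{rad}_Y^f(F)$ and $\tr{rad}_Y^f(F)\leq\tr{rad}_X^f(F)$ separately. The first is immediate: since $Y\subseteq X$, the infimum defining $\tr{rad}_X^f(F)=\inf_{x\in X}r_f(x,F)$ is taken over a larger set than the one defining $\tr{rad}_Y^f(F)=\inf_{y\in Y}r_f(y,F)$, hence $\tr{rad}_X^f(F)\leq\tr{rad}_Y^f(F)$. All the content lies in the reverse inequality, and it suffices to establish the pointwise estimate that for every $x\in X$ and every $\eta>0$ there exists $y\in Y$ with $r_f(y,F)\leq r_f(x,F)+\eta$; taking the infimum over $x\in X$ and letting $\eta\downarrow 0$ then yields $\tr{rad}_Y^f(F)\leq\tr{rad}_X^f(F)$.

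To prove this pointwise estimate I would fix $x\in X$, write $F=\{a_1,\ldots,a_n\}\subseteq Y$ (using the fixed ordering of $F$ to identify the $n$-tuples below with points of $\mathbb{R}^n_{\geq 0}$), and set $r_i:=\|x-a_i\|$ for $1\leq i\leq n$. By construction $x\in\cap_{i=1}^n B_X(a_i,r_i)$, so these $n$ closed balls have centers in $Y$ and a nonempty intersection in $X$. This is exactly the hypothesis needed to invoke that $Y$ has $n.X.I.P.$ in $X$: for every $\delta>0$ there is a point $y_\delta\in\cap_{i=1}^n B_Y(a_i,r_i+\delta)$, i.e.\ $y_\delta\in Y$ with $\|y_\delta-a_i\|\leq r_i+\delta$ for all $i$.

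The remaining step converts this approximate intersection in $Y$ into an estimate on $r_f$. Since the tuple $(\|y_\delta-a_i\|)_{i=1}^n$ is dominated coordinatewise by $(r_i+\delta)_{i=1}^n$, monotonicity of $f$ gives $r_f(y_\delta,F)=f\big((\|y_\delta-a_i\|)_i\big)\leq f\big((r_i+\delta)_i\big)$. Now continuity of $f$ at the point $(r_1,\ldots,r_n)$ lets me send the inflation parameter to zero: since $(r_i+\delta)_i\to(r_i)_i$ as $\delta\downarrow 0$, for $\delta$ small enough one has $f\big((r_i+\delta)_i\big)\leq f\big((r_i)_i\big)+\eta=r_f(x,F)+\eta$. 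Choosing such a $\delta$ and setting $y:=y_\delta$ completes the pointwise estimate, and hence the theorem.

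The argument is short, and its only genuinely load-bearing point is the passage from the approximate (inflated-radius) intersection supplied by $n.X.I.P.$ to an exact bound on the $f$-radius. Because $n.X.I.P.$ only guarantees intersection of the balls $B_Y(a_i,r_i+\delta)$ rather than of $B_Y(a_i,r_i)$, the infimum over $Y$ is a priori only approached at the ``right'' radii; it is precisely the continuity of $f$ that absorbs this $\delta$-slack and forces the two radii to coincide. Note that monotonicity and continuity are the only properties of $f$ used here --- neither coercivity nor strict monotonicity is needed --- which is consistent with the hypotheses of the statement and with the analogy to \cite[Theorem~2.6]{LV}.
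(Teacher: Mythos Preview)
Your proof is correct and follows essentially the same approach as the paper's: fix $x\in X$, apply $n.X.I.P.$ to the balls $B_X(a_i,\|x-a_i\|)$ to obtain $y\in Y$ with $\|y-a_i\|\leq\|x-a_i\|+\delta$, then use monotonicity followed by continuity of $f$ to absorb the $\delta$-inflation. If anything, your ordering of the quantifiers (fix $x$, then $\eta$, then choose $\delta$ via continuity at $(r_1,\ldots,r_n)$) is slightly cleaner than the paper's presentation.
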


\begin{proof}
Suppose that $F=(z_1,\ldots, z_n)$ be an ordered $n$-tuple. Let ${\bf{t_0}}=(\|x-z_1\|,\ldots,\|x-z_n\|)$.
Choose $\e>0$ and let $\de>0$ be such that $|f({\bf{t}})-f({\bf{t_0}})|\leq\e$ whenever $d(\bf{t},\bf{t_0})\leq\de$. Without loss of generality, we may assume $\de<\e$.

Let $x\in X$ be such that, $f(\|x-z_1\|,\ldots,\|x-z_n\|)<\tr{rad}_X^f(F)+\de$. Since $Y$ has $n.X.I.P.$ in $X$, there exists $y\in \cap_{i=1}^nB_Y(z_i,\|x-z_i\|+\de)$.
\beqa
\mbox{Now~} \tr{rad}_Y^f(F) &\leq & f(\|y-z_1\|,\ldots,\|y-z_n\|)\\
                             &\leq & f(\|x-z_1\|+\de,\ldots,\|x-z_n\|+\de)\\
                             &\leq & f(\|x-z_1\|,\ldots,\|x-z_n\|)+\e \\
                             &<& \tr{rad}_X^f(F)+\de+\e<\tr{rad}_X^f(F)+2\e.
\eeqa
Since $\e> 0$ is arbitrary, the result follows.
\end{proof}

The proof of the following theorem follows in the same manner as the proof of  \cite[Theorem~2.7]{LV}.

\begin{thm}\label{T8}
Let $\mc{A}$ be one of the families $\mc{F}, \mc{K}$ and $\mc{B}$. Assume that $F\in \mc{A}(Y)$, where $F=(y_i)_{i\in F}$. Suppose that for each continuous, monotone function $f:\ell_\iy(F)\ra\mb{R}_{\geq0}$,  $\emph{rad}_Y^f(F)=\emph{rad}_X^f(F)$ and $X$ admits $f$-centers for $F$. Then \tfae.
\bla
\item If for $i\in F$, $r_i\in\mb{R}_{>0}$, $\cap_{i\in F}B_X(y_i,r_i)\neq\es$, then $\cap_{i\in F}B_Y(y_i,r_i)\neq\es$.
\item $Y$ admits restricted weighted Chebyshev centers for $F$ for all weights $\rho:F\ra\mb{R}_{>0}$.
\item $Y$ admits restricted $f$-centers for $F$ for all continuous, monotone functions $f:\ell_\iy(F)\ra \mb{R}_{\geq 0}$.
\el
\end{thm}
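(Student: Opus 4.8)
The plan is to prove the three-way equivalence by establishing the cycle $(a)\Rightarrow(b)\Rightarrow(c)\Rightarrow(a)$, exploiting the hypothesis $\tr{rad}_Y^f(F)=\tr{rad}_X^f(F)$ together with the existence of $f$-centers in $X$ at each stage. The key observation driving everything is that the three conditions are merely different ways of packaging the same geometric fact: a nonempty intersection of balls centered at the points of $F$ that survives passage from $X$ to $Y$. Since this theorem mirrors \cite[Theorem~2.7]{LV}, I expect each individual implication to be short once the right translation between $f$-centers and ball-intersections is set up.

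First I would prove $(c)\Rightarrow(b)$, which should be essentially immediate: the weighted Chebyshev functions $f(a)=\sup_{t\in F}\rho(t)a(t)$ with $\rho:F\ra\mb{R}_{>0}$ are a special case of continuous, monotone functions $f:\ell_\iy(F)\ra\mb{R}_{\geq0}$, so if $Y$ admits restricted $f$-centers for all such $f$, it certainly admits them for these particular ones, which are exactly the restricted weighted Chebyshev centers. Next I would handle $(b)\Rightarrow(a)$: given $r_i>0$ with $\cap_{i\in F}B_X(y_i,r_i)\neq\es$, I would choose the weight $\rho(i)=1/r_i$ so that a center $y\in Y$ realizing $\tr{rad}_Y^\rho(F)$ has $\sup_i \|y-y_i\|/r_i$ equal to the infimum over $Y$. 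The nonempty $X$-intersection forces $\tr{rad}_X^\rho(F)\leq 1$, and the hypothesis $\tr{rad}_Y^\rho(F)=\tr{rad}_X^\rho(F)$ then gives $\tr{rad}_Y^\rho(F)\leq 1$; since $(b)$ supplies an actual center $y\in Y$ attaining this value, we get $\|y-y_i\|\leq r_i$ for all $i$, i.e. $y\in\cap_{i\in F}B_Y(y_i,r_i)$.

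The main obstacle, and the step deserving the most care, is $(a)\Rightarrow(c)$, since it must produce an honest restricted $f$-center in $Y$ from the ball-intersection property for an arbitrary continuous monotone $f$. The strategy is: by hypothesis $X$ admits an $f$-center $x_0\in X$, so $\tr{rad}_X^f(F)=f\big((\|x_0-y_i\|)_{i\in F}\big)$, and by the equality of radii this common value is the target to be attained in $Y$. Setting $r_i=\|x_0-y_i\|$ (perturbing to $r_i+\e$ when some $r_i=0$ to keep the radii strictly positive as $(a)$ requires) gives $x_0\in\cap_{i\in F}B_X(y_i,r_i)$, whence $(a)$ yields a point $y\in Y$ in the intersection of the $Y$-balls; by monotonicity of $f$ this $y$ satisfies $f\big((\|y-y_i\|)_{i\in F}\big)\leq f\big((r_i)_{i\in F}\big)=\tr{rad}_X^f(F)=\tr{rad}_Y^f(F)$, forcing equality and exhibiting $y$ as a restricted $f$-center. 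The delicate points here are the $\e$-perturbation needed because $(a)$ is stated only for strictly positive radii, which I would absorb using continuity of $f$ and a limiting or compactness argument, and the fact that $\mc{A}$ ranges over $\mc{F},\mc{K},\mc{B}$ so $F$ may be infinite — meaning the index family and the suprema in $\ell_\iy(F)$ must be handled uniformly rather than coordinatewise, exactly as in the passage from finite tuples to general bounded sets in \cite[Theorem~2.7]{LV}.
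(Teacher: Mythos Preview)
Your argument is essentially the same as the paper's: the paper proves the cycle $(a)\Rightarrow(c)\Rightarrow(b)\Rightarrow(a)$ with exactly the arguments you give (your opening line says $(a)\Rightarrow(b)\Rightarrow(c)\Rightarrow(a)$, but the implications you actually write out are the reverse ones, matching the paper).

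One point deserves correction. In $(a)\Rightarrow(c)$ you worry that some $r_i=\|x_0-y_i\|$ may vanish and propose an $\e$-perturbation absorbed by ``a limiting or compactness argument.'' That route is not available: in a general Banach space there is no compactness to extract a limit of the resulting $y_\e\in Y$, and continuity of $f$ alone does not let you pass to a limit in $Y$. Fortunately the difficulty is illusory: if $\|x_0-y_i\|=0$ for some $i$, then $x_0=y_i\in F\subseteq Y$, so $x_0$ itself already lies in $Y$ and is the desired restricted $f$-center. The paper's proof simply applies $(a)$ with $r_i=\|x-y_i\|$ and tacitly relies on this trivial observation for the degenerate case. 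With that remark in place, your proof and the paper's coincide.
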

\begin{proof}
$(a)\Ra(c)$. Let $f:\ell_\iy(F)\ra\mb{R}_{\geq0}$ be a continuous, monotone function and $x\in\tr{Cent}_X^f(F)$. As $x\in \cap_{i\in F} B_X(y_i,\|x-y_i\|)$, there exists $y\in \cap_{i\in F} B_Y(y_i,\|x-y_i\|)$.
Since $\tr{rad}_Y^f(F)=\tr{rad}_X^f(F)$, it can be clearly understood that  $y\in\tr{Cent}_Y^f(F)$.

$(c)\Ra(b)$. This is obvious.

$(b)\Ra(a)$. Let $x\in\cap_{i\in F}B_X(y_i,r_i)$. Define $\rho_i=\frac{1}{r_i}$ for all $i\in F$. Accordingly, $r_\rho(x,F)\leq1$. Based on our assumption, we obtain $\tr{rad}_Y^\rho(F)=\tr{rad}_X^\rho(F)\leq r_\rho(x,F)\leq1$. Moreover, there exists $y\in Y$ such that $y\in\tr{Cent}_Y^\rho(F)$. Thus $\frac{1}{r_i}\|y-y_i\|\leq1$ for all $i\in F$. Thus $y\in\cap_{i\in F}B_Y(y_i.r_i)$.
\end{proof}

By varying $F\in \mc{A}(Y)$ in Theorem~\ref{T8} we obtain the following.

\begin{thm}\label{T12}
Let $\mc{A}$ be one of the families $\mc{F}, \mc{K}$ and $\mc{B}$. Let $Y$ be a subspace of $X$ and for all $F\in\mc{A}(Y)$, $\emph{rad}_Y^f(F)=\emph{rad}_X^f(F)$ and $X$ admit $f$-centers for $F$, for all continuous monotone $f:\ell_\iy(F)\ra \mb{R}_{\geq 0}$. Then \tfae.
\bla
\item $Y$ is a $\mc{A}-C$-subspace of $X$.
\item The triplet $(X,Y,\mc{A}(Y))$ has {\bf wrcp}.
\item $\emph{Cent}_Y^f(F)=\emph{
Cent}_X^f(F)\cap Y\neq\es$, for all $F\in\mc{A}(Y)$ and each continuous, monotone $f:\ell_\iy(F)\ra \mb{R}_{\geq 0}$.
\el
\end{thm}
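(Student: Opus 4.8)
The plan is to obtain the equivalence by quantifying Theorem~\ref{T8} over all $F\in\mc{A}(Y)$, since for each fixed $F$ the standing hypotheses of Theorem~\ref{T12} are exactly those of Theorem~\ref{T8}. First I would dispose of the set equality asserted in (c): writing $R:=\tr{rad}_Y^f(F)=\tr{rad}_X^f(F)$, a point $y\in Y$ satisfies $y\in\tr{Cent}_Y^f(F)$ iff $r_f(y,F)=R$, and this is the very same condition that describes $\tr{Cent}_X^f(F)\cap Y$; hence $\tr{Cent}_Y^f(F)=\tr{Cent}_X^f(F)\cap Y$ holds automatically for every continuous monotone $f$, so the only substantive assertion in (c) is that $\tr{Cent}_Y^f(F)\neq\es$. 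Consequently (c) is precisely condition (c) of Theorem~\ref{T8} demanded for every $F$, and I would prove the theorem by running the cycle $(c)\Ra(b)\Ra(a)\Ra(c)$.

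For $(c)\Ra(b)$, I would specialize $f$ to a weighted-Chebyshev functional $f_\rho(a)=\sup_{t\in F}\rho(t)a(t)$. For a bounded weight $\rho\geq0$ this $f_\rho$ is $\|\rho\|_\iy$-Lipschitz, hence continuous, and monotone; thus $\tr{Cent}_Y^\rho(F)=\tr{Cent}_Y^{f_\rho}(F)\neq\es$ for every $F$ and every bounded $\rho$, which is \textbf{wrcp}. For $(b)\Ra(a)$, note that \textbf{wrcp}, restricted to strictly positive weights, gives condition (b) of Theorem~\ref{T8} for each $F$, so that theorem yields its condition (a): whenever $r_i>0$ and $\cap_{i\in F}B_X(y_i,r_i)\neq\es$, also $\cap_{i\in F}B_Y(y_i,r_i)\neq\es$. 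To recover the $\mc{A}-C$-subspace property, take $x\in X$ and $A=(a_i)\in\mc{A}(Y)$; if $x\in Y$ choose $y=x$, and otherwise every $r_i:=\|x-a_i\|$ is strictly positive (as $a_i\in Y$ while $x\notin Y$), whence $x\in\cap_i B_X(a_i,r_i)$ and the intersection property produces $y\in Y$ with $\|y-a_i\|\leq\|x-a_i\|$ for all $i$.

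For $(a)\Ra(c)$, I would use the standing assumption that $X$ admits $f$-centers. Fix $F=(y_i)\in\mc{A}(Y)$ and a continuous monotone $f$, choose $x\in\tr{Cent}_X^f(F)$, and apply the $\mc{A}-C$-subspace property to get $y\in Y$ with $\|y-y_i\|\leq\|x-y_i\|$ for all $i$. Monotonicity gives $r_f(y,F)\leq r_f(x,F)=\tr{rad}_X^f(F)=\tr{rad}_Y^f(F)$, while $y\in Y$ forces $r_f(y,F)\geq\tr{rad}_Y^f(F)$; hence $y\in\tr{Cent}_Y^f(F)\neq\es$, which together with the automatic set equality gives (c).

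The steps above are routine once Theorem~\ref{T8} is available; the points demanding care are the bookkeeping between the weights $\rho\geq0$ appearing in the definition of \textbf{wrcp} and the strictly positive weights (equivalently $r_i>0$) in Theorem~\ref{T8}, and the degenerate case $x\in Y$ in $(b)\Ra(a)$. The implication $(a)\Ra(c)$ is where the full hypotheses are genuinely needed — both the existence of $f$-centers in $X$ and the radius equality $\tr{rad}_Y^f=\tr{rad}_X^f$ — and I expect it to be the main, if mild, obstacle.
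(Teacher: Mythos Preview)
Your proposal is correct and follows exactly the approach the paper intends: the paper's proof consists of the single remark that Theorem~\ref{T12} is obtained ``by varying $F\in\mc{A}(Y)$ in Theorem~\ref{T8}'', and you have simply unpacked that, including the routine observation that $\tr{rad}_Y^f=\tr{rad}_X^f$ forces $\tr{Cent}_Y^f(F)=\tr{Cent}_X^f(F)\cap Y$ and the minor bookkeeping needed to pass between the strictly positive radii of Theorem~\ref{T8}(a) and the $\mc{A}-C$-subspace formulation.
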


We now focus on the family $\mc{F}(Y)$ for a subspace $Y$ of $X$. Theorem~\ref{T12} reduces to the characterizations of central subspaces by taking $\mc{A}=\mc{F}$. Some other characterizations of central subspaces, $(GC)$ are also obtained in the subsequent part of this section.

\begin{lemma}\label{T7}
Let $Y$ be a subspace of $X$. If $Y$ has restricted $n.X.I.P.$ in $X$, then $Y$ has $n.X.I.P.$ in $X$.
\end{lemma}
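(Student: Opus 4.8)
The plan is to show that the apparently weaker equal-radius hypothesis already forces the full intersection property, by fixing an arbitrary admissible configuration and reducing the desired conclusion to the common-radius data supplied by restricted $n.X.I.P.$. Concretely, I would fix balls $B_X(a_i,r_i)$, $i=1,\dots,n$, with $a_i\in Y$ and a witness $x_0\in\cap_{i=1}^nB_X(a_i,r_i)$, and must produce, for every $\e>0$, a point $y\in Y$ with $\|y-a_i\|\le r_i+\e$ for all $i$. Since taking $r_1=\cdots=r_n$ shows that restricted $n.X.I.P.$ is exactly the equal-radius special case, the whole content lies in manufacturing the distinct thresholds $r_i$ out of common-radius information. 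The first step is to record the reformulation that restricted $n.X.I.P.$ is equivalent to $\tr{rad}_Y(F)=\tr{rad}_X(F)$ for every $F\in\mc{F}(Y)$ with $card(F)\le n$ (one inequality is automatic, the other is the hypothesis), so that the target becomes the weighted identity $\tr{rad}_Y^\rho(F)=\tr{rad}_X^\rho(F)$ for the weights $\rho_i=1/r_i$; this places the lemma squarely in the circle of ideas of Theorem~\ref{T8} and Theorem~\ref{T10}.

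The central step is the equalization of the radii. Setting $R=\max_i r_i$, the naive move of applying restricted $n.X.I.P.$ to the enlarged balls $B_X(a_i,R)$ only returns a point within $R+\e$ of every $a_i$, which is hopelessly loose for the indices with $r_i<R$; the enlargement must therefore be compensated. My plan is to exploit that the hypothesis is available not at a single radius but simultaneously at every common radius, together with the slack $\e$: I would run an iterative scheme that alternates between the tight witness $x_0$ and successively constructed points of $Y$, invoking restricted $n.X.I.P.$ at a suitably chosen sequence of common radii and lowering, at each stage, the effective radius on the indices that still carry excess over $r_i$, using monotonicity in the spirit of Theorem~\ref{T10} to keep the estimates controlled. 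The objective is to drive the per-index excess to $0$ so that a limit point lands in $\cap_{i=1}^nB_Y(a_i,r_i+\e)$.

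The hard part will be exactly this equalization, i.e.\ transferring the distinct thresholds $r_i$ into the common-radius framework while remaining in the \emph{same} pair $(X,Y)$ and with the \emph{same} number $n$ of balls. Two tempting shortcuts should be explicitly avoided because they fail. Isometrically embedding into an $\ell_1$-sum $X\oplus_1\mb{R}^m$ and shifting the centers in the added coordinates does convert $r_i$ into a common radius $R$ and lifts $x_0$ to a witness of the equal-radius problem, but the added coordinates of the returned almost-center cannot be pinned down, and, worse, verifying restricted $n.X.I.P.$ for the sum turns out to be equivalent to the very statement we are proving; similarly, sliding a center $a_i$ along $a_j-a_i\in Y$ and enlarging to radius $R$ preserves the witness but, by the triangle inequality, surrenders all control of $\|y-a_i\|$. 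Thus the crux is to carry out the limiting argument of the previous paragraph so that the accumulated error remains summable and the final point satisfies all $n$ constraints within $\e$; once that convergence is secured, the conclusion, and hence $n.X.I.P.$, follows.
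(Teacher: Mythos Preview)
Your reformulation is sound: restricted $n.X.I.P.$ is indeed equivalent to $\tr{rad}_Y(F)=\tr{rad}_X(F)$ for all $F\in\mc{F}(Y)$ with $card(F)\le n$, and the lemma would follow from the weighted identity $\tr{rad}_Y^\rho(F)=\tr{rad}_X^\rho(F)$. You also correctly diagnose why the two ``tempting shortcuts'' fail. The gap, however, is precisely the step you flag as central: the ``iterative scheme that alternates between the tight witness $x_0$ and successively constructed points of $Y$, invoking restricted $n.X.I.P.$ at a suitably chosen sequence of common radii and lowering, at each stage, the effective radius on the indices that still carry excess'' is never actually described. The hypothesis only ever returns a point of $Y$ that is within a \emph{common} distance of all $n$ centers; there is no mechanism in your outline for treating indices differently, and no explanation of how an output at one common radius feeds into the next invocation so as to shrink the slack on the small-$r_i$ indices while keeping the error summable. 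As it stands, the proposal is a statement of the problem rather than a proof; note also that Theorem~\ref{T10}, which you invoke ``in spirit'', already \emph{uses} full $n.X.I.P.$ in its proof and so cannot help here.

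The paper's argument is of a completely different nature and avoids any limiting process. It argues by contradiction: assuming $\cap_{i=1}^nB_Y(y_i,r_i+\e)=\es$ for some $\e>0$, it picks a large common target radius $r$ with $r-\frac{\e}{2}>r_i$ for all $i$ and \emph{moves the centers} rather than the radii. At each stage $j$ one separates (in $Y$) the ball $B_Y(y_{j},r_{j}+\e)$ from the intersection of the remaining constraints by a functional $g\in Y^*$, and then translates $y_j$ by $(r-\frac{\e}{2}-r_j)z$ along a near-minimizer $z\in S_Y$ of $g$ to obtain $w_j\in Y$. This single geometric move simultaneously guarantees $B_X(y_j,r_j)\subseteq B_X(w_j,r-\frac{\e}{2})$ (so the new equal-radius balls still meet in $X$) and $B_Y(w_j,r)\cap(\text{remaining constraints})=\es$. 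After $n$ steps one has equal-radius balls $B_X(w_i,r-\frac{\e}{2})$ intersecting in $X$ with $\cap_{i=1}^nB_Y(w_i,r)=\es$, contradicting restricted $n.X.I.P.$ with $\e'=\frac{\e}{2}$. The missing idea in your plan is exactly this use of a separating functional to shift \emph{centers} inside $Y$ so that enlarging a ball to the common radius does not create any new intersection in $Y$.
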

\begin{proof}
Let $\{B_X(y_i,r_i)\}_{i=1}^n$ be a collection of $n$ closed balls in $X$, where $y_1,\cdots,y_n\in Y$ and $r_1,\cdots,r_n>0$ such that $\cap_{i=1}^nB_X(y_i,r_i)\neq\es$.

Suppose there exists an $\e>0$ for which $\cap_{i=1}^nB_Y(y_i,r_i+\e)=\es$. Let $r\in\mathbb{R}$ be such that $r-\frac{\e}{2}>r_i$ for all $i=1,\cdots,n$. We now construct $n$ closed balls $\{B_X(w_i,r-\frac{\e}{2})\}_{i=1}^n$ with $w_1\cdots,w_n\in Y$ such that $B_X(w_i,r-\frac{\e}{2})\supseteq B_X(y_i,r_i)$ and $\cap_{i=1}^nB_Y(w_i,r)=\es$, which contradict our assumption. We now choose $w_i$ inductively.

We define $L_1=\cap_{i=2}^nB_Y(y_i,r_i+\e)$ and $L_2=B_Y(y_1,r_1+\e)$.

{\sc Case~1:} Suppose $L_1=\es$. Then we chose $w_1=y_1$ and obtain 
$B_Y(w_1,r)\cap\left(\cap_{i=2}^nB_Y(y_i,r_i+\e)\right)=\es$ and proceed further to obtain $w_2$.

{\sc Case~2:} Suppose that $L_1\neq\es$. Thus, $L_1\cap L_2=\es$ and $(L_1-y_1)\cap B_Y(0,r_1+\e)=\es$. Thus, we get $g\in Y^*$ such that $g(y-y_1)\geq 1\geq g(x)$ for all $y\in L_1$ and $x\in B_Y(0,r_1+\e)$. 

Now, $\|g\|=\sup_{y\in B_Y}g(y)=\frac{1}{r_1+\e}\sup_{y\in B_Y(0,r_1+\e)}g(y)\leq\frac{1}{r_1+\e}$. 

Choose $\de>0$ such that $\de<\frac{1}{r-\frac{\e}{2}-r_1}\big(1-\frac{r_1+\frac{\e}{2}}{r_1+\e}\big)$. Let $z\in S_Y$ be such that $g(z)\leq -\|g\|+\de$. Let $w_1=y_1+(r-\frac{\e}{2}-r_1)z\in Y$. 

Let $x\in B_X(y_1,r_1)$. Thus,
\beqa
\|x-w_1\|&\leq&\|x-y_1\|+ (r-\frac{\e}{2}-r_1)\|z\|\\
&\leq& r_1+(r-\frac{\e}{2}-r_1)\\
&=&r-\frac{\e}{2}.
\eeqa
Thus, $x\in B_X(w_1,r-\frac{\e}{2})$ and hence $B_X(y_1,r_1)\ci B_X(w_1,r-\frac{\e}{2})$.

Let $y\in B_Y(w_1,r)$. Thus,
\beqa
g(y-y_1)&=&g(y-w_1)+g(w_1-y_1)\\
&\leq&\|g\|r+(r-\frac{\e}{2}-r_1)g(z)\\
&\leq&\|g\|r+(r-\frac{\e}{2}-r_1)(-\|g\|+\de)\\
&\leq&\frac{r_1+\frac{\e}{2}}{r_1+\e}+\de(r-\frac{\e}{2}-r_1)<1.
\eeqa
Thus, for all $y\in B_Y(w_1,r)$, $y\notin L_1$. Hence, $L_1\cap B_Y(w_1,r)=\es$.

Now, suppose that we have $w_i$ for $i\leq j(<n)$ such that, 
\beqa
B_X(w_i,r-\frac{\e}{2})\supseteq B_X(y_i,r_i) ~\text{for}~ i\leq j~\text{and}\\
\cap_{i=1}^jB_Y(w_i,r)\cap\left(\cap_{i=j+1}^nB_Y(y_i,r_i+\e)\right)=\es.
\eeqa
Let us define $K_1=\cap_{i=1}^jB_Y(w_i,r)\cap\left(\cap_{i=j+2}^nB_Y(y_i,r_i+\e)\right)$ and $K_2=B_Y(y_{j+1},r_{j+1}+\e)$. 

We follow the similar techniques that are used in order to obtain $w_{1}$.
The two sets $K_1$ and $K_2$ play similar roles as the sets $L_1$ and $L_2$ like before, and hence we get $w_{j+1}$. This completes the induction, and the proof follows.
\end{proof}

\begin{Cor}\label{C2}
Let $Y$ be a subspace of $X$ such that $\emph{rad}_Y(F)=\emph{rad}_X(F)$ for all $F\in\mc{F}(Y)$. Then for all $n\in\mb{N}$, $Y$ possesses $n.X.I.P.$ in $X$.
\end{Cor}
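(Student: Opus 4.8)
The plan is to reduce the statement to Lemma~\ref{T7} by first showing that the hypothesis forces $Y$ to have the \emph{restricted} $n.X.I.P.$ in $X$. The key observation is that in the restricted notion all $n$ balls share a common radius $r$, and this common-radius situation is exactly what the generalized radius $\tr{rad}$ (the unweighted case $\rho\equiv 1$) measures; the hypothesis is phrased in precisely these unweighted terms.

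First I would fix $n\in\mb{N}$ and take any collection of $n$ closed balls $\{B_X(a_i,r)\}_{i=1}^n$ with common radius $r$, centers $a_i\in Y$, and $\cap_{i=1}^n B_X(a_i,r)\neq\es$. Setting $F=\{a_1,\ldots,a_n\}\in\mc{F}(Y)$, the nonemptiness of the intersection yields some $x\in X$ with $\|x-a_i\|\leq r$ for every $i$, hence $r(x,F)=\sup_i\|x-a_i\|\leq r$ and therefore $\tr{rad}_X(F)\leq r$. Invoking the hypothesis $\tr{rad}_Y(F)=\tr{rad}_X(F)$ then gives $\tr{rad}_Y(F)\leq r$. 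By definition of the infimum defining $\tr{rad}_Y(F)$, for every $\e>0$ there is $y\in Y$ with $r(y,F)<r+\e$, i.e. $\|y-a_i\|<r+\e$ for all $i$, so $y\in\cap_{i=1}^n B_Y(a_i,r+\e)$ and this intersection is nonempty. Since $n$ and the balls were arbitrary, $Y$ has the restricted $n.X.I.P.$ in $X$ for every $n$.

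Finally, applying Lemma~\ref{T7} for each $n$ upgrades the restricted $n.X.I.P.$ to the full $n.X.I.P.$, which completes the argument. I do not anticipate a genuine obstacle here: the one point worth flagging is that the radius equality only controls the common-radius case directly, so one cannot bypass Lemma~\ref{T7} and argue the differing-radii version $\{B_X(a_i,r_i)\}_{i=1}^n$ straight from the hypothesis; Lemma~\ref{T7} is exactly the device that bridges from a common radius to arbitrary radii, and once its applicability is recognized the corollary is immediate.
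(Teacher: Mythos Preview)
Your proof is correct and follows essentially the same route as the paper: first verify the restricted $n.X.I.P.$ by noting that a nonempty common-radius intersection in $X$ forces $\tr{rad}_Y(F)=\tr{rad}_X(F)\leq r$, whence $\cap_{i=1}^n B_Y(a_i,r+\e)\neq\es$ for every $\e>0$, and then invoke Lemma~\ref{T7} to pass to the full $n.X.I.P.$ Your explicit remark that the hypothesis only directly handles the equal-radius case, so Lemma~\ref{T7} is genuinely needed, is a nice clarification of the logical structure.
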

\begin{proof}
Let $y_1,\ldots,y_n\in Y$ and $\{B_X(y_i,r)\}_{i=1}^n$ be a finite family of closed balls in $X$, where $\cap_{i=1}^nB_X(y_i,r)\neq\es$ in $X$. Hence, $\tr{rad}_Y(F)=\tr{rad}_X(F)\leq r$. As $\cap_{i=1}^nB_Y(y_i,\tr{rad}_Y(F)+\e)\neq\es$ holds for all $\e>0$, we have that the balls $\{B_X(y_i,r)\}_{i=1}^n$ almost intersect in $Y$. The result now follows from Lemma~\ref{T7}.
\end{proof}

Combining Theorem~\ref{T10}, Lemma~\ref{T7}, and Corollary~\ref{C2}, we obtain the following.

\begin{thm}\label{T13}
Let $Y$ be a subspace of $X$. Then \tfae.
\bla
\item $Y$ has $n.X.I.P.$ in $X$, for all $n$.
\item $Y$ has restricted $n.X.I.P.$ in $X$, for all $n$.
\item $\emph{rad}_Y(F)=\emph{rad}_X(F)$, for all $F\in\mc{F}(Y)$.
\item $\emph{rad}_Y^f(F)=\emph{rad}_X^f(F)$, for all $F\in\mc{F}(Y)$, for all continuous, monotone functions $f:\mb{R}^n_{\geq 0}\ra \mb{R}_{\geq 0}$, $n\geq 1$.
\el
\end{thm}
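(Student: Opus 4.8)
The plan is to establish a short cycle of implications linking the four conditions, with all substantive content carried by the three previously proved results and the remaining links being trivial specializations. I would prove $(a)\Ra(d)\Ra(c)\Ra(a)$ to obtain the equivalence of the three ``radius/intersection'' conditions, and then attach $(b)$ as a side branch through $(a)\Ra(b)$ and $(b)\Ra(a)$.

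For $(a)\Ra(d)$, fix a finite $F\ci Y$ with $\tr{card}(F)=n$ together with a continuous, monotone $f:\mb{R}^n_{\geq0}\ra\mb{R}_{\geq0}$. Since $(a)$ supplies $n.X.I.P.$ for \emph{every} $n$, in particular for this $n$, Theorem~\ref{T10} applies verbatim and yields $\tr{rad}_Y^f(F)=\tr{rad}_X^f(F)$. For $(d)\Ra(c)$ I would specialize $f$ to $f(t_1,\ldots,t_n)=\max_i t_i$, which is continuous and monotone in the pointwise sense of the definition; by the conventions of Definition~\ref{D1} this choice reduces $\tr{rad}^f$ to the unweighted generalized radius $\tr{rad}$, so the equality in $(d)$ collapses to the equality in $(c)$. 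Finally, $(c)\Ra(a)$ is exactly Corollary~\ref{C2}. This closes the first cycle and gives $(a)\Lra(c)\Lra(d)$.

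To incorporate $(b)$, the implication $(a)\Ra(b)$ is immediate: restricted $n.X.I.P.$ only demands the almost-intersection conclusion for families of balls \emph{sharing a common radius}, which is a special case of the families allowed by $n.X.I.P.$; hence $n.X.I.P.$ for all $n$ trivially forces restricted $n.X.I.P.$ for all $n$. Conversely, $(b)\Ra(a)$ is Lemma~\ref{T7} applied for each $n$ separately. Combined with the first cycle, this establishes the equivalence of all four statements.

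There is no genuine obstacle remaining at this level, since every hard estimate has already been absorbed into Theorem~\ref{T10}, Lemma~\ref{T7}, and Corollary~\ref{C2}; the work is essentially one of assembly. The only points needing care are bookkeeping: verifying that the ``for all $n$'' quantifier in $(a)$ is precisely what licenses invoking Theorem~\ref{T10} at the exact arity $n=\tr{card}(F)$ demanded by a given finite set, and confirming that the supremum function genuinely recovers $\tr{rad}$ rather than a weighted variant. I would also double-check that the $\max$ specialization used in $(d)\Ra(c)$ lies in the class of continuous, monotone functions quantified over in $(d)$, so that the step is legitimate.
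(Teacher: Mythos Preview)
Your proposal is correct and matches the paper's own approach exactly: the paper states that Theorem~\ref{T13} follows by ``combining Theorem~\ref{T10}, Lemma~\ref{T7}, and Corollary~\ref{C2}'', which is precisely your cycle $(a)\Ra(d)\Ra(c)\Ra(a)$ together with the side branch $(a)\Lra(b)$, with the same trivial specializations filling in $(d)\Ra(c)$ and $(a)\Ra(b)$.
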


We now derive a few characterizations of central subspaces of Banach spaces.

\begin{thm}\label{T3}
Let $Y$ be a subspace of $X$ and $Y\in (GC)$. Thus, $Y$ is a restricted central subspace of $X$ if and only if $Y$ is a central subspace of $X$.
\end{thm}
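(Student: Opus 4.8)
The plan is to treat the two implications separately and to notice that only the reverse direction uses the hypothesis $Y\in(GC)$. For the forward direction, suppose $Y$ is a central subspace of $X$. Being a restricted central subspace asks only that every \emph{equal-radii} family $\{B_X(a_i,r)\}_{i=1}^n$ with centers in $Y$ that meets in $X$ also meets in $Y$; this is exactly the special case of the central-subspace condition in which all radii coincide. Hence a central subspace is automatically a restricted central subspace, and no appeal to $(GC)$ is needed here.

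The substance lies in the reverse implication, so assume $Y$ is a restricted central subspace of $X$ and $Y\in(GC)$. First I would observe that a restricted central subspace has restricted $n.X.I.P.$ in $X$ for every $n$: if an equal-radii family meets exactly in $Y$, then it certainly meets in the enlarged balls $B_Y(a_i,r+\e)$ for every $\e>0$, since $B_Y(a_i,r)\ci B_Y(a_i,r+\e)$. By the equivalence $(b)\Leftrightarrow(d)$ of Theorem~\ref{T13}, this upgrades to $\tr{rad}_Y^f(F)=\tr{rad}_X^f(F)$ for every finite $F\ci Y$ and every continuous, monotone $f$; in particular it holds for the weighted functions $a\mapsto\sup_{t\in F}\rho(t)a(t)$, so that $\tr{rad}_Y^\rho(F)=\tr{rad}_X^\rho(F)$ for every bounded weight $\rho>0$.

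With the radii matched, the hypothesis $Y\in(GC)$ supplies the missing attainment: by condition $(b)$ of Theorem~\ref{6} applied to $Y$, the space $Y$ admits restricted weighted Chebyshev centers for every finite tuple in $Y$ and all weights $\rho>0$. I would then reproduce the argument $(b)\Ra(a)$ of Theorem~\ref{T8}. Given a finite family $\{B_X(y_i,r_i)\}_{i=1}^n$ with $y_i\in Y$, $r_i>0$, and a point $x\in\cap_{i=1}^nB_X(y_i,r_i)$, set $\rho_i=1/r_i$, so that $r_\rho(x,F)\leq1$ and hence $\tr{rad}_Y^\rho(F)=\tr{rad}_X^\rho(F)\leq1$. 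Picking $y\in\tr{Cent}_Y^\rho(F)$ gives $r_\rho(y,F)\leq1$, whence $\rho_i\|y-y_i\|\leq1$, i.e. $\|y-y_i\|\leq r_i$ for every $i$, so $y\in\cap_{i=1}^nB_Y(y_i,r_i)$. This shows $Y$ is a central subspace.

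The main obstacle is conceptual rather than computational: one must bridge from the equal-radii (restricted) hypothesis to the arbitrary-radii (central) conclusion. The bridge is the passage through weighted radii, where two facts cooperate. Theorem~\ref{T13} promotes equality of the unweighted generalized radii---all one directly extracts from the restricted hypothesis---to equality of \emph{all} weighted radii, while $(GC)$ guarantees that those weighted infima are actually attained inside $Y$. Without $(GC)$ one would only recover an ``almost'' intersection and could not discard the $\e$, which is precisely why $Y\in(GC)$ is indispensable for this direction.
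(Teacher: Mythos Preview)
Your proof is correct. Both you and the paper start from the trivial observation that a restricted central subspace has restricted $n.X.I.P.$ for all $n$, and both (you via Theorem~\ref{T13}, the paper via Lemma~\ref{T7} directly) upgrade this to $n.X.I.P.$ for all $n$. The difference is in how the hypothesis $Y\in(GC)$ is deployed to eliminate the $\e$. The paper's proof is a two-line appeal to \cite[Proposition~2.9]{PB}, which says that for a space in $(GC)$ an almost-intersecting finite family of balls actually intersects; this is short but relies on an external reference. You instead pass through Theorem~\ref{T13}$(d)$ to obtain equality of the \emph{weighted} radii $\tr{rad}_Y^\rho(F)=\tr{rad}_X^\rho(F)$, use $(GC)$ in the form of Theorem~\ref{6}$(b)$ to produce a weighted Chebyshev center in $Y$, and then run the $\rho_i=1/r_i$ trick from the proof of Theorem~\ref{T8} to place that center inside $\cap_i B_Y(y_i,r_i)$. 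Your route is a bit longer but entirely self-contained within the paper and makes explicit the mechanism by which $(GC)$ removes the $\e$: it supplies attainment of the weighted infimum rather than merely bounding it.
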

\begin{proof}
Let $\{B_X(y_i,r_i)\}_{i=1}^n$ be a collection of $n$ closed balls in $X$, where $y_1,\cdots,y_n\in Y$ and $r_1,\cdots,r_n>0$ such that $\cap_{i=1}^nB_X(y_i,r_i)\neq\es$.
	
{\sc Claim:} $\cap_{i=1}^nB_Y(y_i,r_i)\neq\es$.
	
By \cite[Proposition 2.9]{PB}, it can be sufficiently shown that $\cap_{i=1}^n B_Y(y_i,r_i+\e)\neq\es$ for all $\e>0$. Accordingly, from Lemma \ref{T7}, we conclude the proof.
\end{proof}

\begin{thm}\label{T9}
Let $Y$ be a subspace of $X$ and $X$ admit weighted Chebyshev centers for all finite subsets of $Y$. Suppose that $Y$ has $n.X.I.P.$ in $X$ for all $n$. Then, $Y$ is a central subspace of $X$ if and only if $Y\in (GC)$.
\end{thm}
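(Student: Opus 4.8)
The plan is to prove the two implications separately, consuming one of the two standing hypotheses in each direction and using Theorem~\ref{T13} as the dictionary that translates between the ball-intersection language (central subspace) and the weighted-center language $((GC))$. Throughout I read Theorem~\ref{6} and the notation $\tr{rad}_Y^\rho, \tr{Cent}_Y^\rho$ \emph{inside} the space $Y$ when invoking $(GC)$: namely $Y\in(GC)$ means precisely that $\tr{Cent}_Y^\rho(F)\ne\es$ for every $F\in\mc{F}(Y)$ and all weights $\rho>0$, which is condition $(b)$ of Theorem~\ref{6} applied to $Y$ itself.

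For the implication $Y\in(GC)\Ra Y$ is central, I would start from balls $\{B_X(y_i,r_i)\}_{i=1}^n$ with $y_i\in Y$, $r_i>0$, meeting at some $x_0\in X$. Put $F=(y_1,\ldots,y_n)$ and $\rho_i=1/r_i>0$. Then $\|x_0-y_i\|\le r_i$ reads $r_\rho(x_0,F)\le 1$, so $\tr{rad}_X^\rho(F)\le 1$. Since $Y$ has $n.X.I.P.$ in $X$ for all $n$, Theorem~\ref{T13} (the implication $(a)\Ra(d)$, applied to the continuous, monotone function $\mathbf{t}\mapsto\max_i\rho_i t_i$) gives $\tr{rad}_Y^\rho(F)=\tr{rad}_X^\rho(F)\le 1$. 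As $Y\in(GC)$, the center is attained in $Y$, i.e. $\tr{Cent}_Y^\rho(F)\ne\es$; any such $y$ satisfies $\rho_i\|y-y_i\|\le\tr{rad}_Y^\rho(F)\le 1$, hence $\|y-y_i\|\le r_i$ and $y\in\cap_{i=1}^nB_Y(y_i,r_i)$. The hypothesis that $X$ admits weighted Chebyshev centers is not needed here.

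For the converse, $Y$ central $\Ra Y\in(GC)$, I would verify condition $(b)$ of Theorem~\ref{6} for $Y$ directly: fix $F=(y_1,\ldots,y_n)\in\mc{F}(Y)$ and weights $\rho_i>0$, and show $\tr{Cent}_Y^\rho(F)\ne\es$. By the standing hypothesis, $X$ admits a weighted Chebyshev center $x\in X$ for $F$; set $R=\tr{rad}_X^\rho(F)=r_\rho(x,F)$. If $R=0$ then all $y_i$ coincide and that common point lies in $Y$, so $\tr{Cent}_Y^\rho(F)\ne\es$ trivially; hence assume $R>0$. Then $\|x-y_i\|\le R/\rho_i$ shows $x\in\cap_{i=1}^n B_X(y_i,R/\rho_i)$, a finite family of balls with positive radii $R/\rho_i$ and centers in $Y$. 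Centrality yields $y\in\cap_{i=1}^n B_Y(y_i,R/\rho_i)$, so $r_\rho(y,F)\le R$. Since $Y\ci X$ gives $\tr{rad}_Y^\rho(F)\ge\tr{rad}_X^\rho(F)=R$, we get $r_\rho(y,F)=\tr{rad}_Y^\rho(F)=R$, i.e. $y\in\tr{Cent}_Y^\rho(F)$. As $F$ and $\rho$ were arbitrary, $Y\in(GC)$.

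The argument is essentially a translation between two equivalent formulations, so there is no hard analytic step; the only care needed is bookkeeping. One must read $(GC)$ inside $Y$ rather than $X$, keep the reduction $\rho_i=1/r_i$ within positive weights, and isolate the degenerate case $R=0$. The one place where a hypothesis is genuinely consumed in a nontrivial way is the implication $Y\in(GC)\Ra$ central: there the radius equality $\tr{rad}_Y^\rho(F)=\tr{rad}_X^\rho(F)$ from Theorem~\ref{T13} is exactly what lets the attained center supplied by $(GC)$ see the ambient bound $\le 1$, upgrading the mere ``almost intersection'' granted by $n.X.I.P.$ to an honest intersection in $Y$; without that equality the $(GC)$-center of $F$ in $Y$ would only be known to realize $\tr{rad}_Y^\rho(F)$, which a priori could exceed $1$.
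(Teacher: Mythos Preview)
Your proof is correct. The implication $Y$ central $\Rightarrow Y\in(GC)$ is essentially the paper's argument unpacked: the paper invokes Theorems~\ref{T10} and~\ref{T8} to obtain that $(X,Y,\mc{F}(Y))$ has {\bf wrcp}, and your direct argument (take the attained center $x\in X$, transport it to $Y$ via centrality) is precisely what those two lemmas combine to say.

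For the implication $Y\in(GC)\Rightarrow Y$ central, your route genuinely differs from the paper's. The paper first invokes Theorem~\ref{T3} to reduce to the equal-radius case (restricted central subspace), then uses the unweighted radius equality $\tr{rad}_Y(F)=\tr{rad}_X(F)$ together with $(GC)$ to conclude. You instead work with arbitrary radii from the outset: set $\rho_i=1/r_i$, appeal to Theorem~\ref{T13} in its full weighted form $(a)\Rightarrow(d)$ to get $\tr{rad}_Y^\rho(F)=\tr{rad}_X^\rho(F)\le 1$, and then let $(GC)$ supply the attained center. This bypasses Theorem~\ref{T3} (and hence the rather technical Lemma~\ref{T7} underlying it) entirely. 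Your approach is shorter and more transparent once Theorem~\ref{T13} is in hand; the paper's approach has the modest advantage of exhibiting Theorem~\ref{T3} as an independent structural result, but for the present theorem your direct argument is the more economical one.
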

\begin{proof}
Suppose that $Y$ is a central subspace of $X$. Thus, from Theorems \ref{T10} and \ref{T8}, we obtain $(X,Y,\mc{F}(Y))$ has {\bf wrcp}.
	
Conversely, assume that $Y\in (GC)$. Hence, by Theorem \ref{T3}, it is sufficient to consider balls with centers in $Y$ of equal radii. Let $\{B_X(y_i,r)\}_{i=1}^n$ be a collection of $n$ balls with $y_1,\cdots,y_n\in Y$, $r>0$ such that $\cap_{i=1}^nB_X(y_i,r)\neq\es$. Let $F=(y_1,\cdots,y_n)$. Thus, $\tr{rad}_X(F)\leq r$ and by our assumption, $\tr{rad}_Y(F)\leq r$.  Owing to the fact that $(X,Y,\mc{F}(Y))$ has {\bf wrcp}, $\cap_{i=1}^nB_Y(y_i,\tr{rad}_Y(F))\neq\es$ and consequently $\cap_{i=1}^nB_Y(y_i,r)\neq\es$.
\end{proof}

\begin{thm}
Let $\mc{A}$ be any of the families $\mc{F},\mc{K},\mc{B}$, or $\mc{P}$, $P$ be a projection on $X$ of norm-$1$, $Y=\ker (P)$,  and $Z\ci P(X)$ be a subspace. If $Y+Z$ is an (almost) $\mc{A}(Y+Z)-C$-subspace of $X$, then $Z$ is an (almost) $\mc{A}(Z)-C$-subspace of $P(X)$.
\end{thm}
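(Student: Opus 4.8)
The plan is to verify the definition of an (almost) $\mc{A}(Z)$-$C$-subspace directly, by taking an approximating element in the larger subspace $Y+Z$ supplied by the hypothesis and pushing it forward through the norm-one projection $P$. So I fix $u\in P(X)$, a set $A\in\mc{A}(Z)$, and $\e>0$, and aim to produce $z\in Z$ with $\|z-a\|\leq\|u-a\|+\e$ for every $a\in A$ (the exact case being the same computation with $\e=0$, so both assertions will be handled at once).

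First I would note that since $Z\ci Y+Z$, the set $A$ is also a member of $\mc{A}(Y+Z)$: being finite, compact, bounded, or arbitrary is an absolute property of $A$, inherited when $A$ is viewed as a subset of the larger space $Y+Z$. Because $u\in P(X)\ci X$, the hypothesis that $Y+Z$ is an (almost) $\mc{A}(Y+Z)$-$C$-subspace of $X$, applied with $x=u$, yields some $w\in Y+Z$ satisfying $\|w-a\|\leq\|u-a\|+\e$ for all $a\in A$.

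The key step is to set $z:=Pw$. Writing $w=w_Y+w_Z$ with $w_Y\in Y=\ker(P)$ and $w_Z\in Z\ci P(X)$, we get $Pw=Pw_Z=w_Z\in Z$, since $P$ restricts to the identity on $P(X)$. Moreover each $a\in A\ci Z\ci P(X)$ satisfies $Pa=a$, so that $z-a=Pw-Pa=P(w-a)$, and hence $\|z-a\|=\|P(w-a)\|\leq\|w-a\|\leq\|u-a\|+\e$, using $\|P\|=1$. This is exactly the required estimate, and the same line with $\e=0$ settles the non-almost case, so $Z$ is an (almost) $\mc{A}(Z)$-$C$-subspace of $P(X)$.

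There is no serious obstacle here; the crux is simply the recognition that $P$ fixes both $A$ and $Z$ while annihilating the $Y$-component of the approximant $w$, so that component can be discarded for free without enlarging any of the distances to points of $A$. The only points worth double-checking are the absoluteness of the family membership $A\in\mc{A}(Z)\Ra A\in\mc{A}(Y+Z)$ for each of $\mc{F},\mc{K},\mc{B},\mc{P}$ (immediate), and that $P$ acts as the identity on $P(X)$ (a consequence of $P^2=P$). Notably, the argument uses neither any intersection property of $X$ nor the existence of centers; it is a purely geometric consequence of having a contractive projection with kernel $Y$.
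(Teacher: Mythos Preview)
Your proof is correct and follows essentially the same approach as the paper: obtain an approximant in $Y+Z$ from the hypothesis and then apply the norm-one projection $P$ to land in $Z$ without increasing distances to points of $A\ci P(X)$. The paper phrases the argument via the equivalent ball-intersection formulation (starting from $\cap_{a\in A}B_X(a,r_a)\cap P(X)\neq\es$ and producing $z_0\in Z$ in the same intersection), but the underlying computation $\|Pw-a\|=\|P(w-a)\|\leq\|w-a\|$ is identical to yours.
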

\begin{proof}
Let $A\in\mc{A}(Z)$. Suppose $\underset{a\in A}{\cap}B_X(a,r_a)\cap P(X)\neq\es$. Thus, $\underset{a\in A}{\cap}B_X(a,r_a)\cap X\neq\es$. Based on our assumption, $\underset{a\in A}{\cap}B_X(a,r_a)\cap (Y+Z)\neq\es$. Let $y_0+z_0\in \underset{a\in A}{\cap}B_X(a,r_a)\cap (Y+Z)\neq\es$. Accordingly, for all $a\in A$,
   \beqa
   \|z_0-a\|&=&\|P(z_0-y_0-a)\|
   \leq\|z_0-y_0-a\|
   \leq r_a.
   \eeqa
This proves $Z$ is an $\mc{A}(Z)-C$-subspace of $P(X)$.
The remaining follows in a similar way as stated above.
\end{proof}

\begin{Pro}
Let $\mc{A}$ be any of the families $\mc{F},\mc{K},\mc{B}$, or $\mc{P}$. Let $Z\ci Y\ci X$ represent subspaces of $X$ such that $Y$ is an M-ideal in $X$. If $Z$ is an (almost) $\mc{A}(Z)-C$-subspace of $Y$ and $Y$ has (almost) $\mc{A}(Y)$-IP, then $Z$ is an (almost) $\mc{A}(Z)-C$-subspace of $X$. 
\end{Pro}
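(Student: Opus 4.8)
The plan is to exploit the $M$-ideal hypothesis at the level of biduals and then chain the two $C$-subspace hypotheses in a single descent. The crucial structural input is that, since $Y$ is an $M$-ideal in $X$, the defining property that $Y^\perp$ is an $L$-summand in $X^*$ dualizes: writing $X^*=Y^\perp\oplus_1 W$ and using the isometry $W\cong Y^*$, we get an $M$-summand decomposition $X^{**}=(Y^\perp)^*\oplus_\iy Y^{**}$. Under this identification the canonical image of any $a\in Y$ lies purely in the $Y^{**}$ summand, so if $P:X^{**}\ra Y^{**}$ denotes the associated $M$-projection then $Pa=a$ and $(I-P)a=0$ for all $a\in Y$. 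The whole argument then rests on the single inequality, valid for $x\in X\ci X^{**}$ and $a\in Y$, namely $\|x-a\|=\max(\|Px-a\|,\|(I-P)x\|)\geq\|Px-a\|$. Consequently $y^{**}:=Px\in Y^{**}$ satisfies $\|y^{**}-a\|\leq\|x-a\|$ for every $a\in Y$.

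Granting this, the proof is a three-step descent from $X^{**}$ down to $Z$. Fix $x\in X$, $A\in\mc{A}(Z)$, and, in the almost case, $\e>0$; note that $A\in\mc{A}(Y)$ as well, since $A\ci Z\ci Y$ with $Z$ closed in $Y$, so each of $\mc{F},\mc{K},\mc{B},\mc{P}$ is preserved when the ambient subspace is enlarged. First, set $y^{**}=Px$, so that $\|y^{**}-a\|\leq\|x-a\|$ for all $a\in A$ by the displayed inequality. Second, since $Y$ has (almost) $\mc{A}(Y)$-IP, i.e. $Y$ is an (almost) $\mc{A}(Y)-C$-subspace of $Y^{**}$, apply this to $y^{**}\in Y^{**}$ and $A$ to produce $y\in Y$ with $\|y-a\|\leq\|y^{**}-a\|$ (up to $\tfrac{\e}{2}$ in the almost case) for all $a\in A$. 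Third, since $Z$ is an (almost) $\mc{A}(Z)-C$-subspace of $Y$, apply this to $y\in Y$ and $A$ to produce $z\in Z$ with $\|z-a\|\leq\|y-a\|$ (up to $\tfrac{\e}{2}$). Concatenating the three estimates gives $\|z-a\|\leq\|x-a\|$ (up to $\e$) for all $a\in A$, which is exactly the assertion that $Z$ is an (almost) $\mc{A}(Z)-C$-subspace of $X$. The parenthetical "(almost)" is handled uniformly: in the exact case every $\e$ is absent and the three inequalities compose without loss, while in the almost case the two halves $\tfrac{\e}{2}$ sum to $\e$.

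I expect the main obstacle to lie in the first paragraph rather than in the chaining. One must justify that the $M$-ideal property transfers to the $M$-summand decomposition of $X^{**}$, and, more delicately, that the canonical embeddings $Y\hookrightarrow Y^{**}\hookrightarrow X^{**}$ and $Y\hookrightarrow X\hookrightarrow X^{**}$ are compatible, so that every $a\in Y$ is genuinely fixed by $P$. This compatibility is precisely what legitimizes $P(x-a)=Px-a$ and $(I-P)(x-a)=(I-P)x$; it may be checked by evaluating the image of $a$ against a functional $x^*=j^\perp+w\in Y^\perp\oplus_1 W$, where $j^\perp(a)=0$ and $w$ corresponds under $W\cong Y^*$ to a functional restricting to $a$, or else simply cited from the standard $M$-ideal theory in \cite{H}. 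Once this identification is secured, the two remaining steps are formal invocations of the hypotheses and involve no further computation.
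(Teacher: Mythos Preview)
Your proof is correct and follows essentially the same route as the paper: use the $M$-ideal decomposition $X^{**}=Y^{\perp\perp}\oplus_\iy N^\perp$ (equivalently your $(Y^\perp)^*\oplus_\iy Y^{**}$) to project $x$ onto $Y^{**}$, invoke the $\mc{A}(Y)$-IP to descend to $Y$, and then the $\mc{A}(Z)-C$ hypothesis to descend to $Z$. The only cosmetic difference is that the paper phrases the argument in terms of intersections of balls $\{B_X(a,r_a)\}_{a\in A}$ with arbitrary radii, whereas you work directly with the defining inequalities $\|z-a\|\leq\|x-a\|(+\e)$; these are trivially equivalent, and your explicit $\tfrac{\e}{2}+\tfrac{\e}{2}$ bookkeeping for the almost case is a welcome addition that the paper omits.
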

\begin{proof}
Let $A\in\mc{A}(Z)$ and $\{B_X(a,r_a)\}_{a\in A}$ be a collection of closed balls, where  $r_a>0$ for all $a\in A$, such that $\cap_{a\in A}B_X(a,r_a)\neq\es$. Since $Y$ is an $M$-ideal in $X$, we obtain $X^{**}=Y^{\perp\perp}\oplus_\iy N^\perp$ for a subspace $N$ of $X^*$. If $x\in \cap_{a\in A}B_X(a,r_a)$, then it can be assumed that $x=y+z$, where $y\in Y^{\perp\perp}$ and $z\in N^\perp$. Now $\max\{\|y-a\|,\|z\|\}\leq r_a$ for all $a\in A$. Since $Y$ has $\mc{A}(Y)$-IP, there exists $y_0\in Y$ such that $\|y_0-a\|\leq r_a$ for all $a\in A$. Thus, based on our assumption, $\cap_{a\in A} B_Z(a,r_a)\neq\es$.
\end{proof}

Theorem ~\ref{T5} and Corollary \ref{C1} indicate the consequences of our previous  observations and are derived from the techniques employed in \cite[Theorem 2.1, Corollary 2.2]{DY1}. We now use the well-known Michael selection theorem (\cite[Theorem~3.2$^{''}$]{EM}) several times to obtain our observations.

\begin{thm}\label{T5}
Let $X$ be a Lindenstrauss space, $K$ and $S$ compact Hausdorff spaces, and $\psi:K\ra S$ a continuous onto map. Let $\psi^*:C(S,X)\ra C(K,X)$ be a continuous isometric embedding expressed as $\psi^*f=f\psi$. Accordingly, $\psi^*C(S,X)$ is a $\mc{K}(\psi^*C(S,X))-C$-subspace of $C(K,X)$.
\end{thm}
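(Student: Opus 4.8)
The plan is to reduce the $\mc{K}$-$C$-subspace property of $Y:=\psi^* C(S,X)$ inside $C(K,X)$ to a fibrewise selection problem over $S$ with values in $X$, and then to produce the selection by Michael's theorem. First I would record that, as $\psi$ is a continuous surjection of compact Hausdorff spaces (hence a quotient map) and is onto, $\psi^*$ is an isometry and $Y=\{g\in C(K,X):g\text{ is constant on each fibre }\psi^{-1}(s)\}$. To verify the defining property of a $\mc{K}(Y)$-$C$-subspace, I must show: given any $g\in C(K,X)$ and any compact $A\ci Y$, writing each $a\in A$ as $a=\psi^* f_a$ with $f_a\in C(S,X)$ and setting $r_a:=\|g-a\|_\iy$, there is $h\in Y$ with $\|h-a\|_\iy\le r_a$ for all $a\in A$. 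Writing $h=\psi^* f$ and using that $\psi^*$ is isometric, this amounts exactly to finding $f\in C(S,X)$ with $f(s)\in\bigcap_{a\in A}B_X(f_a(s),r_a)$ for every $s\in S$.

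Accordingly I set $\Phi(s):=\bigcap_{a\in A}B_X(f_a(s),r_a)$, a closed convex subset of $X$. It is nonempty for each $s$: choosing any $k\in\psi^{-1}(s)$ (which exists since $\psi$ is onto), the point $g(k)$ satisfies $\|g(k)-f_a(s)\|=\|g(k)-(\psi^* f_a)(k)\|\le r_a$ for all $a$, so $g(k)\in\Phi(s)$. I would also note two facts for later use: the centres $\{f_a(s):a\in A\}$ form a compact subset of $X$, being the image of the compact set $A$ under the continuous evaluation at $s$; and the balls $B_X(f_a(s),r_a)$ are pairwise intersecting, again because they share $g(k)$.

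The crucial step is the lower semicontinuity of $s\mapsto\Phi(s)$, and this is where the Lindenstrauss hypothesis enters. Since $A$ is compact in $C(S,X)$, the family $\{f_a:a\in A\}$ is equicontinuous by Arzel\`a--Ascoli. Fix $s_0$, a point $x_0\in\Phi(s_0)$, and $\e>0$; choose a neighbourhood $V$ of $s_0$ with $\|f_a(s)-f_a(s_0)\|\le\e$ for all $a$ and all $s\in V$, so that $\|x_0-f_a(s)\|\le r_a+\e$ on $V$. For $s\in V$ I would then consider the augmented family of closed balls $\{B_X(x_0,\e)\}\cup\{B_X(f_a(s),r_a):a\in A\}$: its centres still form a compact set, and it is pairwise intersecting, since the $B_X(f_a(s),r_a)$ share $g(k)$ for $k\in\psi^{-1}(s)$ while $B_X(x_0,\e)$ meets each $B_X(f_a(s),r_a)$ by the estimate $\|x_0-f_a(s)\|\le r_a+\e$. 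The Lindenstrauss characterization of $X$ now produces a common point $z$, which lies in $\Phi(s)$ and satisfies $\|z-x_0\|\le\e$; thus $\Phi(s)\cap B_X(x_0,\e)\neq\es$ for $s\in V$, proving lower semicontinuity. This augmented-ball device, inserting the extra ball $B_X(x_0,\e)$ and invoking the compact-centre intersection property of the $L_1$-predual $X$, is the heart of the argument and the step I expect to be the main obstacle.

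With $\Phi$ nonempty-, closed-, convex-valued and lower semicontinuous on the paracompact space $S$ and taking values in the Banach space $X$, Michael's selection theorem \cite[Theorem~3.2$^{''}$]{EM} yields a continuous $f:S\ra X$ with $f(s)\in\Phi(s)$ for all $s$. Setting $h:=\psi^* f=f\circ\psi\in Y$ and using that $\psi^*$ is isometric, I conclude $\|h-a\|_\iy=\|f-f_a\|_\iy=\sup_{s\in S}\|f(s)-f_a(s)\|\le r_a$ for every $a\in A$, which is precisely the required inequality. Hence $Y=\psi^* C(S,X)$ is a $\mc{K}(Y)$-$C$-subspace of $C(K,X)$.
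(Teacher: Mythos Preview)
Your proof is correct and follows essentially the same route as the paper: reduce to a fibrewise selection problem on $S$, define the multifunction $\Phi(s)=\bigcap_a B_X(f_a(s),r_a)$, show it is nonempty via a preimage point $k\in\psi^{-1}(s)$, prove lower semicontinuity by combining equicontinuity of the compact family $\{f_a\}$ with the Lindenstrauss pairwise-intersection characterization applied to the augmented family $\{B_X(x_0,\e)\}\cup\{B_X(f_a(s),r_a)\}$, and finish with Michael's selection theorem. The only cosmetic difference is that the paper formulates the hypothesis as an arbitrary intersecting family of balls $\cap_{f} B(\psi^*f,r_f)\neq\emptyset$ rather than fixing $r_a=\|g-a\|_\infty$ from the definition, and it is somewhat terser in the lower semicontinuity step; your explicit verification of pairwise intersection and compactness of the centre set is a welcome clarification of that step.
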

\begin{proof}
Let $A\in\mc{K}(\psi^*C(S,X))$ and $B=\{f\in C(S,X):\psi^*f\in A\}$. Suppose $\cap_{ f\in B}B(\psi^*f,r_f)\neq\es$, where $r_f>0$ for all $f\in B$. We define a multivalued map $F:S\ra \mc{C}(X)$ by $F(y)=\cap_{f\in B}B(f(y),r_f)$ for all $y\in S$. Evidently, $F(y)$ is closed, convex, and $F(y)\neq\es$ for all $y\in S$.

{\sc Claim :} $F$ is lower semicontinuous.

Let $G\ci X$ be open and $y_0\in \{y\in S:F(y)\cap G\neq\es\}$. Let $a\in F(y_0)\cap G$. Thus, we have $a\in\cap_{f\in B}B(f(y_0),r_f)$ and $B(a,\e)\ci G$ for some $\e>0$. Accordingly, there exists an open neighbourhood $N$ of $y_0$ such that $\|f(y)-f(y_0)\|<\e$ for all $y\in N$ and $f\in B$. Now for all $y\in N$, we have $B(f(y),r_f)\cap B(a,\e)\neq\es$ for all $f\in B$ and since $X$ is a Lindenstrauss space, $F(y)\cap G\neq\es$. Thus, $N\ci \{y\in S: F(y)\cap G\neq\es\}$ and $F$ is lower semicontinuous. Thus, in accordance with Michael's selection theorem, there exists a continuous $h:S\ra X$ such that $h(y)\in F(y)$ for all $y\in S$. It is evident that $\psi^*h\in \cap_{f\in B}B(\psi^*f,r_f)\cap\psi^*C(S,X)$.
 \end{proof}
 
 \begin{Cor}\label{C1}
Let $S,K,\psi$ and $X$ be as in Theorem \ref{T5}. Furthermore, fix $y_0\in S$ and let $M=\{\psi^*f:f\in C(S,X)~ \tr{and}~ f(y_0)=0\}$. Then, $M$ is a $\mc{K}(M)-C$-subspace of $C(K,X)$. \end{Cor}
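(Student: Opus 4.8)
The plan is to follow the proof of Theorem~\ref{T5} verbatim up to the construction of the set-valued carrier, and then insert one extra constraint that forces the Michael selection to vanish at $y_0$. Concretely, fix $A\in\mc{K}(M)$ and radii $r_a>0$, $a\in A$, with $\cap_{a\in A}B(a,r_a)\neq\es$ in $C(K,X)$; I must produce a point of this intersection lying in $M$. Since $\psi^*$ is an isometric, hence injective, embedding and $\psi$ is onto, each $a\in A$ is uniquely $\psi^*f$ for some $f\in C(S,X)$ with $f(y_0)=0$; set $B=\{f\in C(S,X):\psi^*f\in A,\ f(y_0)=0\}$, so that $A=\{\psi^*f:f\in B\}$, and define $F(y)=\cap_{f\in B}B(f(y),r_f)$ for $y\in S$ exactly as in Theorem~\ref{T5}.

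From the proof of Theorem~\ref{T5} I may take for granted that $F$ has nonempty, closed, convex values and is lower semicontinuous: nonemptiness comes from evaluating any $g\in\cap_{f\in B}B(\psi^*f,r_f)$ at a point of the fiber $\psi^{-1}(y)$, while lower semicontinuity uses equicontinuity of the compact family $B$ and the fact that $X$ is a Lindenstrauss space, and the subfamily constraint $f(y_0)=0$ does not affect that argument. The one genuinely new observation is that $0\in F(y_0)$: indeed $f(y_0)=0$ for every $f\in B$, so $F(y_0)=\cap_{f\in B}B(0,r_f)$, which contains $0$ because each $r_f>0$.

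I would then modify the carrier at the single point $y_0$. Define $\tilde F:S\ra\mc{C}(X)$ by $\tilde F(y_0)=\{0\}$ and $\tilde F(y)=F(y)$ for $y\neq y_0$; its values remain nonempty, closed and convex. The key step is to check that $\tilde F$ stays lower semicontinuous. For an open $G\ci X$ with $0\in G$ one has $\{y:\tilde F(y)\cap G\neq\es\}=\{y:F(y)\cap G\neq\es\}$ (both sides contain $y_0$, since $0\in F(y_0)\cap G$), which is open; for open $G$ with $0\notin G$ one has $\{y:\tilde F(y)\cap G\neq\es\}=\{y:F(y)\cap G\neq\es\}\cap(S\sm\{y_0\})$, again open because $S$ is Hausdorff and $F$ is lower semicontinuous. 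Applying Michael's selection theorem to $\tilde F$ produces a continuous $h:S\ra X$ with $h(y)\in F(y)$ for all $y$ and $h(y_0)=0$; as in Theorem~\ref{T5}, $\|h(y)-f(y)\|\leq r_f$ for all $y\in S=\psi(K)$ yields $\|\psi^*h-\psi^*f\|\leq r_f$, so $\psi^*h\in\cap_{f\in B}B(\psi^*f,r_f)$, while $h(y_0)=0$ forces $\psi^*h\in M$.

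The main obstacle is precisely forcing the selection to take the prescribed value $0$ at $y_0$ without violating the hypotheses of Michael's theorem; this is resolved by the observation $0\in F(y_0)$ together with the lower-semicontinuity check for $\tilde F$ above. Equivalently, one could invoke the selection-extension form of Michael's theorem to extend the partial selection $y_0\mapsto 0$ defined on the closed set $\{y_0\}$, which requires the same consistency condition $0\in F(y_0)$.
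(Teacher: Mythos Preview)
Your proposal is correct and follows essentially the same approach as the paper's own proof: both define the carrier $F$ as in Theorem~\ref{T5}, redefine it to be $\{0\}$ at the single point $y_0$, verify lower semicontinuity, and invoke Michael's selection theorem. In fact you supply more detail than the paper, which simply asserts that the modified map is lower semicontinuous ``similarly, as done in the proof of Theorem~\ref{T5}''; your case split on whether $0\in G$ together with the observation $0\in F(y_0)$ is exactly the justification that step requires.
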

\begin{proof}
Let $A\in \mc{K}(M)$ and $B=\{f\in C(S,X):\psi^*f\in A\}$. Suppose $\cap_{f\in B}B(\psi^*f,r_f)\neq\es$, where $r_f>0$ for all $f\in B$. Define $F:S\ra \mc{C}(X)$ by $F(y)=\cap_{f\in B}B(f(y),r_f)$ for all $y\in S$. As in the proof of Theorem \ref{T5}, we obtain $F(y)\neq\es$ for all $y\in S$. Now we define $F_0:S\ra \mc{C}(X)$ by:  
\begin{equation*}
F_0(y)=\begin{cases}
F(y) \quad &\text{if} \, y \neq y_0 \\
  \{0\} \quad &\text{if} \, y = y_0 \\
\end{cases}
\end{equation*}
 Clearly, $F_0(y)$ is closed, convex and $F_0(y)\neq\es$ for all $y\in S$. Similarly, as done in the proof of Theorem \ref{T5}, it can be shown that $F_0$ is lower semicontinuous. Thus, by Michael's selection theorem, we obtain $h:S\ra X$ such that $h(y)\in F_0(y)$ for all $y\in S$. Thus, $\psi^*h\in\cap_{f\in B}B(\psi^*f,r_f)\cap M$.
\end{proof}

Proceeding similarly to the proof of Theorem \ref{T5}, the following can be obtained.

\begin{Cor}
\bla
\item Let $X$ be a Lindenstrauss space, $T$ compact Hausdorff space and $S$ be a closed subset of $T$. If $M=\{f\in C(T,X):f|_S=0\}$, then $M$ is a $\mc{K}(M)-C$-subspace of $C(T,X)$.
\item Let $X$ be a Lindenstrauss space and $T$ be a compact Hausdorff space. Then, $C(T,X)$ is a $\mc{K}(C(T,X))-C$-subspace of $\ell_\iy(T,X)$.
\el
\end{Cor}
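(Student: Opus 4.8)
The plan is to reproduce the architecture of the proof of Theorem~\ref{T5} in both parts: I encode the claimed ball-intersection property as a continuous-selection problem for an intersection-of-balls multifunction on $T$, verify lower semicontinuity using equicontinuity of the compact parameter set together with the Lindenstrauss ball property, and then apply Michael's selection theorem to produce a common point lying inside the prescribed subspace. Throughout I use the ball formulation of the $\mc{K}$-$C$-subspace property, exactly as in the proofs of Theorem~\ref{T5} and Corollary~\ref{C1}.

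For part $(a)$, fix a compact $A\in\mc{K}(M)$ and radii $r_f>0$ with $\cap_{f\in A}B(f,r_f)\neq\es$ in $C(T,X)$. Define $F:T\ra\mc{C}(X)$ by $F(t)=\cap_{f\in A}B(f(t),r_f)$; evaluating any global intersection point at $t$ exhibits each $F(t)$ as a nonempty closed convex set. Since every $f\in A$ vanishes on $S$, we have $0\in F(t)$ for $t\in S$, so I replace $F$ by the glued multifunction $F_0$ equal to $F$ off $S$ and equal to $\{0\}$ on $S$. Because $F_0(t)\ci F(t)$ everywhere, any continuous selection $h$ of $F_0$ automatically lies in $M$ (as $h|_S=0$) and satisfies $\|h-f\|_\iy\leq r_f$ for all $f\in A$, which is precisely the point that must be produced.

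The crux is lower semicontinuity of $F_0$, and this is where I expect the only genuinely new point beyond Theorem~\ref{T5}, namely the behaviour at boundary points $t_0\in S$. Off $S$ the map $F_0=F$ and the argument of Theorem~\ref{T5} applies verbatim, while for $t_0\notin S$ closedness of $S$ supplies a neighbourhood missing $S$. For $t_0\in S$, take $G$ open with $0\in F_0(t_0)\cap G$ and $B(0,\e)\ci G$; equicontinuity of the compact set $A$ (Arzel\`a--Ascoli) yields a neighbourhood $N$ of $t_0$ with $\|f(t)\|<\e$ for all $t\in N$ and $f\in A$, using $f(t_0)=0$. For $t\in N\sm S$ the family $\{B(f(t),r_f)\}_{f\in A}\cup\{B(0,\e)\}$ has compact centre set $\{f(t):f\in A\}\cup\{0\}$ and is pairwise intersecting (the original balls share a point of $F(t)$, and each $f(t)$ lies in $B(f(t),r_f)\cap B(0,\e)$), so the Lindenstrauss property forces $F(t)\cap B(0,\e)\neq\es$; hence $F_0(t)\cap G\neq\es$ on $N$. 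Michael's selection theorem then completes part $(a)$.

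Part $(b)$ is the same argument with the constraint deleted: for compact $A\in\mc{K}(C(T,X))$ and balls intersecting in $\ell_\iy(T,X)$, the multifunction $F(t)=\cap_{f\in A}B(f(t),r_f)$ is nonempty-, closed-, convex-valued and, by the identical equicontinuity-plus-Lindenstrauss computation, lower semicontinuous, so Michael's theorem yields a continuous selection $h\in C(T,X)$ with $\|h-f\|_\iy\leq r_f$. The main obstacle in both parts is the routine but essential bookkeeping that the enlarged ball family remains pairwise intersecting with compact centre set, so that the Lindenstrauss hypothesis is legitimately applicable; once that is in place, the selection argument runs mechanically.
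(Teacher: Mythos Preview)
Your proposal is correct and follows exactly the approach the paper intends: the paper merely states that the corollary is obtained by ``proceeding similarly to the proof of Theorem~\ref{T5},'' and your argument is precisely the natural adaptation of that proof (and of Corollary~\ref{C1}) to the present setting, with the glued multifunction $F_0$ on $S$ in part~(a) and the straight selection argument in part~(b). Your detailed verification of lower semicontinuity at points of $S$ via equicontinuity of the compact family $A$ and the pairwise-intersecting-balls characterization of Lindenstrauss spaces fills in exactly what the paper leaves implicit.
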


Our following observation shows that the characteristic of being a central subspace is stable in the spaces of continuous functions. Let us recall the manner in which the Lindenstrauss spaces are characterized in the introduction.

\begin{thm}\label{T6}
Let $\mc{A}$ be any of the families $\mc{F},\mc{K}$. Let $X$ be a Lindenstrauss space, $Y$ a $\mc{A}(Y)-C$-subspace of $X$, and $T$ a compact Hausdorff space. Then, $C(T,Y)$ is a $\mc{A}(C(T,Y))-C$-subspace of $\ell_\iy(T,X)$.
\end{thm}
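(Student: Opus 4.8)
The plan is to reproduce the Michael-selection scheme of Theorem~\ref{T5}, but now carried out at the level of the fibre subspace $Y\ci X$. Fix $g\in\ell_\iy(T,X)$ and $A\in\mc{A}(C(T,Y))$, and write $r_f=\|g-f\|_\iy$ for $f\in A$. The goal is to produce $h\in C(T,Y)$ with $\|h-f\|_\iy\le r_f$ for every $f\in A$, since this is exactly the $\mc{A}(C(T,Y))\text{-}C$-subspace condition tested against the competitor $g$. To this end I would define the multivalued map $\Phi:T\ra\mc{C}(Y)$ by $\Phi(t)=\cap_{f\in A}B_Y(f(t),r_f)$ and look for a continuous selection $h(t)\in\Phi(t)$; any such $h$ lies in $C(T,Y)$ and satisfies $\|h(t)-f(t)\|\le r_f$ for all $t$, hence $\|h-f\|_\iy\le r_f$.

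First I would check the hypotheses of Michael's selection theorem. Each $\Phi(t)$ is closed and convex, being an intersection of closed balls with the closed subspace $Y$. For nonemptiness, observe that the evaluation $e_t:C(T,Y)\ra Y$, $f\mapsto f(t)$, is norm-nonincreasing, so $e_t(A)=\{f(t):f\in A\}$ is finite when $\mc{A}=\mc{F}$ and compact when $\mc{A}=\mc{K}$; in either case $e_t(A)\in\mc{A}(Y)$. Since $\|g(t)-f(t)\|\le r_f$, the point $g(t)\in X$ witnesses that the balls $\{B_X(f(t),r_f)\}_{f\in A}$ meet in $X$, and the $\mc{A}(Y)\text{-}C$-subspace property of $Y$ then delivers a point of $Y$ lying in all the $B_Y(f(t),r_f)$, so $\Phi(t)\neq\es$.

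The crux is lower semicontinuity of $\Phi$. Fix an open $G\ci Y$ and $t_0$ with some $a\in\Phi(t_0)\cap G$, and pick $\e>0$ with $B_Y(a,\e)\ci G$. Because $A$ is finite, or compact and hence equicontinuous by the Arzel\`a--Ascoli theorem, there is a neighbourhood $N$ of $t_0$ with $\|f(t)-f(t_0)\|<\e$ for all $t\in N$ and all $f\in A$ simultaneously; this uniformity over the possibly infinite family is exactly what equicontinuity supplies. For $t\in N$ the enlarged family $\{B_X(f(t),r_f)\}_{f\in A}\cup\{B_X(a,\e)\}$ is pairwise intersecting: the balls $B_X(f(t),r_f)$ all contain $g(t)$, while $\|a-f(t)\|\le\|a-f(t_0)\|+\|f(t_0)-f(t)\|<r_f+\e$ makes $B_X(a,\e)$ meet each of them. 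Its centres $e_t(A)\cup\{a\}$ form a compact subset of $Y$ lying in $\mc{A}(Y)$. Since $X$ is a Lindenstrauss space, this family has a common point $p\in X$; as all centres lie in $Y$, the $\mc{A}(Y)\text{-}C$-subspace property descends the intersection to a point $w\in Y$ with $\|w-f(t)\|\le\|p-f(t)\|\le r_f$ for all $f$ and $\|w-a\|\le\|p-a\|\le\e$. Then $w\in\Phi(t)\cap B_Y(a,\e)\ci\Phi(t)\cap G$, so $\{t:\Phi(t)\cap G\neq\es\}$ is open and $\Phi$ is lower semicontinuous.

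With nonempty closed convex values and lower semicontinuity established, Michael's selection theorem yields a continuous $h:T\ra Y$ with $h(t)\in\Phi(t)$, which finishes the argument. The main obstacle I anticipate is precisely the three-way interplay in the lower-semicontinuity step: equicontinuity of $A$ (to make $N$ uniform over the infinite family when $\mc{A}=\mc{K}$), the Lindenstrauss property of $X$ (to secure the common point $p$ in $X$, since neither $g(t)$ nor $a$ meets all the constraints on its own), and the $\mc{A}(Y)\text{-}C$-subspace property (to pull $p$ down into $Y$). One should also keep an eye on the degenerate case $r_f=0$, i.e. $g=f\in A$, where some balls collapse to points; this is harmless, since the descent argument uses $g(t)$ and $p$ as explicit witnesses and never requires strictly positive radii.
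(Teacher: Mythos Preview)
Your proposal is correct and follows essentially the same Michael-selection scheme as the paper's proof. The paper is terser, deferring lower semicontinuity to ``similar arguments stated in Theorem~\ref{T5}''; you make explicit the additional descent step---invoking the $\mc{A}(Y)\text{-}C$-subspace hypothesis to pull the Lindenstrauss common point $p\in X$ down into $Y$---which is genuinely needed here because the multimap $\Phi$ takes values in $Y$ rather than in $X$.
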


\begin{proof}
We follow the similar techniques that are used in Theorem~\ref{T5}.

Let $A\in\mc{A}(C(T,Y))$ and  $\cap_{f\in A}B(f,r_f)\cap \ell_\iy(T,X)\neq\es$, where $r_f>0$ for all $f\in A$. Define $F:T\ra \mc{C}(Y)$ by $F(t)=\cap_{f\in A}B(f(t),r_f)\cap Y$. Now since for all $t\in T$, $\cap_{f\in A}B(f(t),r_f)\cap X\neq\es$, based on our assumption, $\cap_{f\in A}B(f(t),r_f)\cap Y\neq\es$ for all $t\in T$. Clearly, $F(t)$ is closed and convex for all $t\in T$.

By using similar arguments stated in Theorem~\ref{T5} it follows that $F$ is lower semicontinuous.

By Michael's selection theorem, we obtain $h:T\ra Y$ such that $h(t)\in F(t)$ for all $t\in T$. This completes the proof.
\end{proof}

We end this section by providing a straightforward application of the conditional expectation projection $E:L_p(\Si,X)\ra L_p(\Si',X)$, for $1\leq p<\iy$. 
In the following $(\Omega,\Si,\mu)$ denotes a finite measure space and $\Si^\prime$ is a sub $\si$-algebra of $\Si$.

\begin{Pro}\label{P3}
Let $Y$ be a subspace of $X$ and $\mc{A}$ be any of the families $\mc{F},\mc{K},\mc{B}$, or $\mc{P}$. Let $L_1(\Si,Y)$ be a $\mc{A}(L_1(\Si,Y))-C$-subspace of $L_1(\Si,X)$. Consequently, $L_1(\Si',Y)$ is an $\mc{A}(L_1(\Si',Y))-C$-subspace of $L_1(\Si',X)$.
\end{Pro}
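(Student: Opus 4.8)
The plan is to translate the $\mc{A}-C$-subspace hypothesis into the ball-intersection language used throughout this section, and then transport a witness from $L_1(\Si,Y)$ down to $L_1(\Si',Y)$ by means of the conditional expectation $E:L_1(\Si,X)\ra L_1(\Si',X)$. Recall that $E$ is a norm-$1$ linear projection, and because it is built from averages over sets in $\Si'$, it carries $Y$-valued functions to $Y$-valued functions; thus $E$ restricts to a norm-$1$ projection of $L_1(\Si,Y)$ onto $L_1(\Si',Y)$.

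First I would fix $A\in\mc{A}(L_1(\Si',Y))$ together with radii $r_a>0$, $a\in A$, such that $\cap_{a\in A}B_{L_1(\Si',X)}(a,r_a)\neq\es$; it then suffices to produce a point of $L_1(\Si',Y)$ lying in all of these balls. Since $\Si'\ci\Si$, the space $L_1(\Si',X)$ sits isometrically as a closed subspace of $L_1(\Si,X)$, and likewise $L_1(\Si',Y)\ci L_1(\Si,Y)$ is closed. For each of the families $\mc{F},\mc{K},\mc{B},\mc{P}$ one checks that $A$ is again a member of $\mc{A}(L_1(\Si,Y))$ and that $B_{L_1(\Si',X)}(a,r_a)$ is the trace on $L_1(\Si',X)$ of $B_{L_1(\Si,X)}(a,r_a)$. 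In particular $\cap_{a\in A}B_{L_1(\Si,X)}(a,r_a)\neq\es$.

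Now I would invoke the hypothesis that $L_1(\Si,Y)$ is an $\mc{A}(L_1(\Si,Y))-C$-subspace of $L_1(\Si,X)$ to obtain some $g\in L_1(\Si,Y)$ with $\|g-a\|\leq r_a$ for every $a\in A$. The decisive step is to set $\widetilde g=E(g)$. Because $g\in L_1(\Si,Y)$ we have $\widetilde g\in L_1(\Si',Y)$, and because each $a\in A$ already lies in $L_1(\Si',Y)$ it is $\Si'$-measurable, whence $E(a)=a$. Using linearity of $E$ and $\|E\|=1$,
\[
\|\widetilde g-a\|=\|E(g-a)\|\leq\|g-a\|\leq r_a\qquad(a\in A),
\]
so $\widetilde g\in\cap_{a\in A}B_{L_1(\Si',Y)}(a,r_a)$ and the intersection is nonempty, as required.

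The argument presents no serious obstacle beyond two bookkeeping facts that must be verified carefully: that $E$ genuinely maps $L_1(\Si,Y)$ into $L_1(\Si',Y)$ (immediate on $Y$-valued simple functions and then by density, since $Y$ is closed), and that membership in $\mc{A}$ is inherited under the inclusion $L_1(\Si',Y)\ci L_1(\Si,Y)$ for each of the four families. The identity $E(a)=a$ for $a\in A$, which exploits that the centers are $\Si'$-measurable, is precisely what lets the norm-$1$ projection collapse the witness without enlarging any radius; the same computation with an extra $\e$ would also yield the almost version.
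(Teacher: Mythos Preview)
Your proof is correct and follows exactly the route the paper intends: the paper states Proposition~\ref{P3} without proof, describing it as ``a straightforward application of the conditional expectation projection $E:L_p(\Si,X)\ra L_p(\Si',X)$,'' which is precisely the mechanism you use. Your careful remarks on why $E$ maps $L_1(\Si,Y)$ into $L_1(\Si',Y)$ and why $A$ remains in the appropriate family $\mc{A}$ after passing to the larger space fill in the details the paper omits.
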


\begin{Rem}
In section~3, we encounter cases in Theorem \ref{T2} and \ref{T11} when $L_1(\Si,Y)$ is a $\mc{A}(L_1(\Si,Y))-C$-subspace of $L_1(\Si,X)$, for subspaces $Y$ of $X$.
\end{Rem}

\section{Weighted restricted Chebyshev centers in spaces of vector-valued functions} 

First, we note that the following may be established by the arguments given in \cite[Theorem~2]{DA}. 

\begin{thm}
Let $X$ be a Banach space which is uniformly convex and let $T$ be a topological space. The Banach space $C_b(T, X)$ admits weighted Chebyshev centers for all closed bounded subsets of $C_b(T, X)$ and bounded weights $\rho$.
\end{thm}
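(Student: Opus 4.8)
The plan is to reduce the problem about vector-valued continuous bounded functions on $T$ to a pointwise problem in $X$, exactly in the spirit of the selection-theoretic arguments used in Theorem~\ref{T5} and Theorem~\ref{T6}, and then to invoke the uniform convexity of $X$ to produce a \emph{unique} weighted Chebyshev center at each point of $T$ and glue these together into a continuous function. Concretely, I would fix a closed bounded set $A\ci C_b(T,X)$ and a bounded weight $\rho:A\ra\mb{R}_{\geq 0}$, and I would look at the minimization $\inf_{g\in C_b(T,X)}\sup_{f\in A}\rho(f)\|g-f\|_\iy$. The guiding idea is that for each fixed $t\in T$, the set $A_t=\{f(t):f\in A\}$ is a bounded subset of $X$, and since $X$ is uniformly convex it admits a weighted Chebyshev center, which moreover is unique by strict convexity. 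So the natural candidate for the center is the function $h(t)=\tr{Cent}_X^{\rho}(A_t)$, the pointwise weighted center.

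The key steps, in order, would be: first, show that the pointwise problem has a solution for each $t$, i.e. that the uniformly convex space $X$ admits weighted Chebyshev centers for the bounded set $A_t$ (this is where uniform convexity is used, via reflexivity to get existence and via strict convexity to get uniqueness). Second, record that the weighted radius function $t\mapsto \tr{rad}_X^{\rho}(A_t)$ is well-behaved (bounded, and in fact continuous) so that the candidate $h$ stays within the correct ball structure. Third, and most importantly, establish continuity of the selection $t\mapsto h(t)$; here I would set up a multivalued map $t\mapsto\{x\in X:\sup_{f\in A}\rho(f)\|x-f(t)\|\leq\tr{rad}_X^\rho(A_t)\}$ (or a slightly enlarged version indexed by $\e$), verify it is lower semicontinuous and has nonempty closed convex values, and apply the Michael selection theorem to extract a continuous selector, just as in the proofs of Theorem~\ref{T5} and Corollary~\ref{C1}. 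The uniqueness from strict convexity then forces the selector to coincide with the pointwise centers, and continuity is inherited. Finally, I would check that the resulting $h$ is bounded (so $h\in C_b(T,X)$) and genuinely attains the infimum, i.e. $h\in\tr{Cent}_{C_b(T,X)}^{\rho}(A)$, by comparing $\sup_{f\in A}\rho(f)\|h-f\|_\iy$ against the pointwise radii.

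The main obstacle I anticipate is the continuity/lower-semicontinuity argument in the third step, because the supremum over the possibly infinite bounded set $A$ interacts delicately with the pointwise evaluation: I must control $\sup_{f\in A}\rho(f)\|f(t)-f(t_0)\|$ as $t\to t_0$, and the elements of $A$ are only continuous, not equicontinuous, so a uniform estimate over all $f\in A$ is not automatic. This is precisely the gap that the cited argument in \cite[Theorem~2]{DA} must handle, presumably by exploiting that $A$ is bounded together with a careful $\e$-$\de$ analysis of the radius function and the defining balls; alternatively one replaces the exact center-valued map by the $\e$-relaxed version $F_\e(t)=\{x:\sup_{f\in A}\rho(f)\|x-f(t)\|\leq\tr{rad}_X^\rho(A_t)+\e\}$, for which lower semicontinuity is easier to verify, obtains a continuous selector $h_\e$, and then passes to the limit using uniform convexity to show the $h_\e$ converge to a genuine center. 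A secondary technical point is confirming that strict convexity truly yields pointwise uniqueness of weighted centers even when some weights vanish; one may need to assume the weights are strictly positive or argue that the set of active constraints still pins down a unique minimizer.
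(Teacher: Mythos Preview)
The paper gives no proof of this theorem; it simply states that the result ``may be established by the arguments given in \cite[Theorem~2]{DA}''. So the relevant comparison is with Amir's argument, and your proposal takes a genuinely different route from it.

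Amir's method does not construct the center pointwise and then appeal to a selection theorem. Instead, one takes a minimizing sequence $(g_n)\subseteq C_b(T,X)$ for the functional $\Phi(g)=\sup_{f\in A}\rho(f)\|g-f\|_\infty$ and shows directly, using the modulus of convexity of $X$, that $(g_n)$ is Cauchy in the sup norm. The key estimate is pointwise and uniform: if $\Phi(g_n),\Phi(g_m)\leq R+\e$ (with $R=\tr{rad}^\rho_{C_b(T,X)}(A)$) then for every $t\in T$ and every $f\in A$ one has $\|g_i(t)-f(t)\|\leq (R+\e)/\rho(f)$, and uniform convexity forces
\[
\rho(f)\Big\|\tfrac{g_n(t)+g_m(t)}{2}-f(t)\Big\|\leq (R+\e)\Big(1-\delta_X\big(\tfrac{\rho(f)\,\|g_n(t)-g_m(t)\|}{R+\e}\big)\Big).
\]
Were $\|g_n-g_m\|_\infty$ bounded below by some $\eta>0$ along a subsequence, the right-hand side would drop strictly below $R$ uniformly in $t$ and $f$, contradicting $\Phi((g_n+g_m)/2)\geq R$. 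Thus $(g_n)$ is Cauchy, its limit lies in $C_b(T,X)$, and continuity of $\Phi$ gives the center. No selection theorem, no equicontinuity hypothesis on $A$, and no analysis of the pointwise radius map $t\mapsto\tr{rad}_X^\rho(A_t)$ is needed.

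Your approach, by contrast, runs into exactly the obstacle you flag, and it is a genuine gap rather than a technicality. Lower semicontinuity of the multivalued map $t\mapsto\{x:\sup_{f\in A}\rho(f)\|x-f(t)\|\leq r\}$ (for any choice of threshold $r$, whether $\tr{rad}_X^\rho(A_t)$ or $\tr{rad}_X^\rho(A_t)+\e$ or the global $R+\e$) requires you to control $\sup_{f\in A}\rho(f)\|f(t)-f(t_0)\|$ as $t\to t_0$, and a merely bounded set $A\subseteq C_b(T,X)$ need not be equicontinuous. Your $\e$-relaxed workaround does not remove this: the same uniform-in-$f$ continuity is needed to verify lsc of $F_\e$, and the continuity of $t\mapsto\tr{rad}_X^\rho(A_t)$ that you would also need is equally unavailable. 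Even if you somehow produced continuous $\e$-approximate centers $h_\e$, the passage to the limit $\e\to 0$ would require exactly the Cauchy estimate above---so you would be reproving Amir's lemma after an unnecessary detour. The cleaner path is to drop the selection-theoretic scaffolding and argue directly with a minimizing sequence as in \cite{DA}.
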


The following result demonstrates that in the spaces of Bochner integrable functions $L_p(\Si,X)$, where $(\Omega,\Si,\mu)$ is a finite measure space, the property $\mr{F}$-{\bf \tr{rcp}} is stable for a particular class of functions $\mr{F}$.
For $n\in\mb{N}$ and $1\leq p<\iy$, $\mr{F}_p$ denotes a countable family of functions viz. $(f_p^n)_{n=1}^\iy$, where $f_p^n:\mb{R}^n_{\geq0}\ra\mb{R}_{\geq0}$ is defined by $f_p^n(\al_1,\cdots,\al_n)=\left(\sum_{i=1}^n|\al_i|^p\right)^{\frac{1}{p}}$.

\begin{thm}
Consider the measure space $(\Omega, \Sigma, \mu)$ as stated above. Let $Y$ be a separable subspace of $X$ and consider the corresponding function spaces $L_p(\Si, Y)\ci L_p(\Si, X)$. Let $\mr{F}_p$ be the family of functions as stated above, then:
\bla
\item  $(X,Y,\mc{F}(X))$ has \emph{\bf wrcp} if and only if $(L_{\iy}(\Si,X),L_\iy(\Si,Y),\mc{F}(L_\iy(\Si,X)))$ has \emph{\bf wrcp}.
\item for $1\leq p<\iy$, $(X,Y,\mc{F}(X))$ has \emph{\bf $\mr{F}_p$-rcp} if and only if $(L_p(\Si,X),L_p(\Si,Y),\mc{F}(L_p(\Si,X)))$ has \emph{\bf $\mr{F}_p$-rcp}.
\el
\end{thm}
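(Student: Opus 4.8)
The plan is to reduce both equivalences to a fibrewise problem on $\Om$ and to reassemble the fibrewise centers into a single function-space center by a measurable selection. Write the tuple as $F=(g_1,\dots,g_n)$ and, for $t\in\Om$, set $F(t)=(g_1(t),\dots,g_n(t))\in\mc{F}(X)$; let $\phi$ stand for $\rho$ in part (a) and for $f_p^n$ in part (b), and let $L_\bullet(\Si,\cdot)$ abbreviate $L_\iy(\Si,\cdot)$ in (a) and $L_p(\Si,\cdot)$ in (b). The engine of the proof is the fibrewise splitting of the objective: for $g$ in the relevant subspace,
\[
r_\rho(g,F)=\operatorname*{ess\,sup}_{t\in\Om} r_\rho(g(t),F(t)),\qquad r_{f_p^n}(g,F)^p=\int_\Om r_{f_p^n}(g(t),F(t))^p\,d\mu(t).
\]
The first is the commutation of an essential supremum with a finite maximum, the second is $\|h\|_p^p=\int_\Om\|h(t)\|^p\,d\mu$ applied coordinatewise. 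Minimizing over $g$ thus reduces the computation of $\tr{rad}^{\phi}_{L_\bullet(\Si,Y)}(F)$ to the fibrewise radii $t\mapsto\tr{rad}^{\phi}_Y(F(t))$; the inequality ``$\ge$'' is immediate, while the matching inequality together with attainment of the infimum is what the selection argument supplies.

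The two $(\Leftarrow)$ directions need no selection. Given $F=(x_1,\dots,x_n)$ in $X$ with weights $\rho$ (resp.\ with $f_p^n$), embed it as the constant tuple $\hat F=(\hat x_1,\dots,\hat x_n)$; constants lie in $L_p(\Si,X)$ and $L_\iy(\Si,X)$ because $\mu(\Om)<\iy$. Testing constant competitors $g\equiv y\in Y$ bounds $\tr{rad}^{\phi}_{L_\bullet(\Si,Y)}(\hat F)$ above by $\tr{rad}^{\phi}_Y(F)$ (up to the factor $\mu(\Om)^{1/p}$ in the $L_p$ case), while the fibrewise inequality $r_\phi(g(t),F)\ge\tr{rad}^{\phi}_Y(F)$ bounds it below; hence the two radii agree. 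A center $g$ of $\hat F$ in the function space, provided by the hypothesis, then satisfies $r_\phi(g(t),F)=\tr{rad}^{\phi}_Y(F)$ for a.e.\ $t$, so any attained value $g(t)\in Y$ belongs to $\tr{Cent}^{\phi}_Y(F)$; this set is therefore nonempty.

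The substance lies in the $(\Rightarrow)$ directions, where I must produce one (essentially) integrable $Y$-valued center out of the fibrewise ones. By hypothesis $(X,Y,\mc{F}(X))$ has the relevant property, so for a.e.\ $t$ the set $\Phi(t):=\tr{Cent}^{\phi}_Y(F(t))$ is nonempty, and it is closed and convex because $y\mapsto r_\phi(y,F(t))$ is continuous and convex. I then select measurably: the map $(t,y)\mapsto r_\phi(y,F(t))$ is Carath\'eodory (measurable in $t$ through the $g_i$, continuous in $y$), the fibrewise radius $t\mapsto\tr{rad}^{\phi}_Y(F(t))=\inf_k r_\phi(y_k,F(t))$ is measurable via a countable dense set $\{y_k\}\ci Y$, and hence the graph $\{(t,y):r_\phi(y,F(t))\le\tr{rad}^{\phi}_Y(F(t))\}$ of $\Phi$ is measurable; a measurable selection theorem (of Kuratowski--Ryll-Nardzewski / Aumann type, completing $\mu$ if needed) then yields a strongly measurable $g$ with $g(t)\in\Phi(t)$ a.e. Finally $g$ lands in the right space: fixing an index $i_0$ of positive weight, the center condition gives $\|g(t)-g_{i_0}(t)\|\le C\,\tr{rad}^{\phi}_Y(F(t))$, and $\tr{rad}^{\phi}_Y(F(t))\le r_\phi(g_1(t),F(t))$ is essentially bounded (resp.\ $p$-integrable); so $g\in L_\iy(\Si,Y)$ (resp.\ $L_p(\Si,Y)$), and by the splitting identity $g$ attains $\tr{rad}^{\phi}_{L_\bullet(\Si,Y)}(F)$, i.e.\ $g\in\tr{Cent}^{\phi}_{L_\bullet(\Si,Y)}(F)$.

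The main obstacle is exactly this measurable selection: establishing that the fibrewise center multifunction $\Phi$ is a measurable, closed-valued multifunction and extracting a strongly measurable selection from it. Separability of $Y$ is indispensable here---it underlies the joint measurability coming from the Carath\'eodory structure, the reduction of the fibrewise infimum to a countable one, and the identification of Borel with strong measurability of the selection (Pettis). Once the selection is in hand, the splitting identities and the domination estimates controlling $\|g(t)\|$ are routine, so these should be recorded but not belaboured.
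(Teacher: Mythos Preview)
Your argument is correct and follows essentially the same route as the paper: a fibrewise splitting of the objective together with a measurable selection of fibrewise centers, using separability of $Y$ to certify measurability. The only cosmetic differences are that the paper invokes the von Neumann selection theorem rather than Kuratowski--Ryll-Nardzewski/Aumann, and it leaves both the $(\Leftarrow)$ direction and the verification that the selection lies in the correct $L_\bullet$-space to the reader, whereas you spell these out.
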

\begin{proof}
$(a)$. Let $F\in\mc{F}(L_\iy(\Si,X))$. We prove the result when $card(F)=2$ because new ideas are not involved for higher values of $card(F)$. Let $F=\{f_1,f_2\}$ and $\rho=(\rho_1,\rho_2)$ be the corresponding weights. Now, there exists a measurable subset $E\ci \Omega$ such that $\mu(E)=0$ and $f_1,f_2$ are bounded on $\Omega\sm E$. Define for all $t\in \Omega\sm E$, $F_t=\{f_1(t),f_2(t)\}$. Let $G=\{(t,y)\in (\Omega\sm E)\times Y:r_\rho(y,F_t)=\textrm{rad}_Y^\rho(F_t)\}$. Since $(X,Y,\mc{F}(X))$ has {\bf wrcp}, the projection of $G$ on $\Omega\sm E$ is $\Omega\sm E$.

Suppose $(y_n)$ is dense in $Y$. Then, $G=\cap_{n\in\mathbb{N}}\{(t,y)\in (\Omega\sm E)\times Y: r_\rho(y,F_t)\leq r_\rho(y_n,F_t)\}$. Since all the involved functions are measurable, $G$ is a measurable set. Thus, as a consequence of the von Neumann selection theorem \cite[Theorem~7.2]{TP}, we obtain a measurable function $g_0:(\Omega\sm E)\ra Y$ such that $(t,g_0(t))\in G$ for $\mu$-a.e $t$. Then, $g_0(t)\in\tr{Cent}_Y^\rho(F_t)$ for $\mu$-a.e. $t\in \Omega\sm E$. Clearly, $g_0\in L_\iy(\Si,Y)$. For $g\in L_\iy(\Si,Y)$, we obtain
\beqa
r_\rho(g,F)&=&\underset{i=1,2}{\max}~\underset{t\in \Omega}{\textrm{ess~sup}}\rho_i\|g(t)-f_i(t)\|\\
&=&\underset{t\in \Omega}{\textrm{ess~sup}}~\underset{i=1,2}{\max}\rho_i\|g(t)-f_i(t)\|\\
&\geq&\underset{t\in \Omega}{\textrm{ess~sup}}~\underset{i=1,2}{\max}\rho_i\|g_0(t)-f_i(t)\|\\
&=&\underset{i=1,2}{\max}~\underset{t\in \Omega}{\textrm{ess~sup}}\rho_i\|g_0(t)-f_i(t)\|\\
&=&r_\rho(g_0,F).
\eeqa
Hence, $g_0\in\textrm{Cent}^\rho_{L_\iy(\Si,Y)}(F)$.

$(b)$. Let $F\in\mc{F}(L_p(\Si,X))$. We prove the result when $card(F)=2$, as no new ideas are involved for higher values of $card(F)$. Let $F=\{f_1,f_2\}$ and $f_p^2:\mb{R}_{\geq0}^2\ra\mb{R}_{\geq0}$ represent the function in $\mr{F}_p$ as stated above. Accordingly, there exists a measurable subset $E\ci \Omega$ such that $\mu(E)=0$ and $f_1,f_2$ are finite valued on $\Omega\sm E$. Define for all $t\in \Omega\sm E$, $F_t=\{f_1(t),f_2(t)\}$. Let $G=\{(t,y)\in (\Omega\sm E)\times Y:r_{f_p^2}(y,F_t)=\textrm{rad}_Y^{f_p^2}(F_t)\}$. Since $(X,Y,\mc{F}(X))$ has {\bf $\mr{F}_p$-rcp}, the projection of $G$ on $\Omega\sm E$ is $\Omega\sm E$.

Thus, in a similar manner as the proof of $(a)$, we obtain a measurable function $g_0:(\Omega\sm E)\ra Y$ such that $(t,g_0(t))\in G$ $\mu$-a.e. $t$. Clearly, $g_0\in L_p(\Si,Y)$. It is easy to see that $g_0\in\tr{Cent}_{L_p(\Si,Y)}^{f_p^2}(F)$.
\end{proof}

Our next few results are derived under an assumption that the function $r_f(.,F)$ is lower semicontinuous w.r.t. a suitable topology on $X$, for a given bounded set $F\in\mc{B}(X)$, which is now stated in the following remark. For an arbitrary set $\Gamma$, we define $\ell_1(\Gamma):=\{\vartheta:\Ga\ra\mb{R}_{\geq 0}:\sum_\ga \vartheta (\ga)<\iy\}$.

\begin{Rem}\label{R2}
\bla
\item Clearly $r_f(.,F):X\ra \mb{R}_{\geq 0}$ is a continuous and coercive function, where $r_f(x,F)=f((\|x-a\|)_{a\in F})$, when $F\in \mc{B}(X)$ and $f:\ell_\iy(F)\ra\mb{R}_{\geq0}$ is a continuous, monotone, and coercive function.  
\item In $(a)$, if we choose $\rho:F\ra\mb{R}_{\geq 0}$ a bounded weight then $r_\rho(.,F):(X,w)\ra\mb{R}_{\geq 0}$ is lower semicontinuous. Moreover $r_\rho (.,F):(X,w^*)\ra\mb{R}_{\geq 0}$ is also lower semicontinuous, when $X$ is a dual space.
\item For $F\in\mc{B}(X)$ and for any $\vartheta\in \ell_1(F)$, since $\vartheta$ induces a continuous, monotone functional on $\ell_\iy (F)$, it is easy to observe that $r_\vartheta (.,F):(X,w)\ra \mb{R}_{\geq 0}$ is lower semicontinuous and so is when $X$ is a dual space endowed with the $w^*$-topology.
\el
\end{Rem}

\begin{Pro}\label{P1}
Suppose that $X$ is a dual space, $Y$ is a $w^*$-closed subspace of $X$, $F\in\mc{B}(X)$ and $f:\ell_\iy(F)\ra\mb{R}_{\geq0}$ be a monotone, coercive function such that $r_f(.,F):X\ra\mb{R}_{\geq0}$ is $w^*$-lower semicontinuous. Then, $\emph{Cent}_Y^f(F)\neq\es$.
\end{Pro}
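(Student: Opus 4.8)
The plan is to apply the direct method of minimization using $w^*$-compactness. Write $X=Z^*$ for a Banach space $Z$, so that by the Banach--Alaoglu theorem every norm-bounded, $w^*$-closed subset of $X$ is $w^*$-compact. Set $\rho:=\tr{rad}_Y^f(F)=\inf_{y\in Y}r_f(y,F)$, which is a well-defined finite quantity since $F\in\mc{B}(X)$ guarantees $(\|x-a\|)_{a\in F}\in\ell_\iy(F)$ (indeed $\sup_{a\in F}\|x-a\|\le\|x\|+\sup_{a\in F}\|a\|<\iy$), so that $r_f(\cdot,F)$ is a genuine real-valued map on $X$. For each real $r>\rho$ I would consider the sublevel set $S_r:=\{y\in Y:r_f(y,F)\le r\}$; the aim is to produce a point in $\bigcap_{r>\rho}S_r$, since any such $y_0$ satisfies $r_f(y_0,F)\le\rho$ and, lying in $Y$, also $r_f(y_0,F)\ge\rho$, whence $y_0\in\tr{Cent}_Y^f(F)$.

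Next I would verify that each $S_r$ is a nonempty $w^*$-compact set. Nonemptiness for $r>\rho$ is immediate from the definition of the infimum. For $w^*$-closedness, note that $\{x\in X:r_f(x,F)\le r\}$ is $w^*$-closed by the assumed $w^*$-lower semicontinuity of $r_f(\cdot,F)$, and intersecting with the $w^*$-closed subspace $Y$ preserves $w^*$-closedness. The crucial point is boundedness, where coercivity enters: choosing $R>0$ with $f(\varphi)>r$ whenever $\|\varphi\|_\iy>R$, any $y\in S_r$ must satisfy $\sup_{a\in F}\|y-a\|\le R$; fixing one $a_0\in F$ then gives $\|y\|\le R+\|a_0\|$, so $S_r\ci B_X(0,R+\|a_0\|)$. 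Being $w^*$-closed and bounded, $S_r$ is $w^*$-compact.

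Finally I would close the argument by compactness. The family $\{S_r\}_{r>\rho}$ is nested ($r_1\le r_2$ implies $S_{r_1}\ci S_{r_2}$) and consists of nonempty $w^*$-compact sets, so it has the finite intersection property; by $w^*$-compactness $\bigcap_{r>\rho}S_r\neq\es$. Since $\bigcap_{r>\rho}S_r=\{y\in Y:r_f(y,F)\le\rho\}=\tr{Cent}_Y^f(F)$, this yields $\tr{Cent}_Y^f(F)\neq\es$. I expect the only genuine subtlety to be the boundedness step: it is precisely coercivity of $f$ together with $F$ being a bounded set that forces the sublevel sets into a ball, and without it the $w^*$-compactness machinery would fail to apply. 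Notably, monotonicity of $f$ is not needed in this particular argument.
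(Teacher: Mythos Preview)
Your proof is correct and follows essentially the same approach as the paper: both implement the direct method via Banach--Alaoglu, using coercivity to obtain boundedness, $w^*$-closedness of $Y$ together with $w^*$-lower semicontinuity of $r_f(\cdot,F)$ to obtain $w^*$-compactness, and then extracting a minimizer. The only cosmetic difference is that the paper passes a minimizing sequence to a $w^*$-convergent subnet, whereas you intersect nested $w^*$-compact sublevel sets; your remark that monotonicity of $f$ plays no role here is also accurate.
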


\begin{proof}
Let $(y_n)\ci Y$ be such that $r_f(y_n,F)\ra \tr{rad}_Y^f(F)$. As $f$ is coercive, the sequence $(y_n)$ is bounded. Hence there exists $y_0\in Y$ such that $y_\alpha\ra y_0$ in $w^*$-topology, for some subnet $(y_\al)$ of $(y_n)$. It is clear that $r_f(y_0,F)=\tr{rad}_Y^f(F)$ and hence $y_0\in \tr{Cent}_Y^f(F)$.
\end{proof}
The following is a variation of \cite[Proposition IV.1.2]{H}. For a $B\in \mc{B}(X)$ and $f:\ell_\iy (B)\ra\mb{R}_{\geq 0}$, $f$ is said to be {\it weakly strictly monotone} if $f(\varphi_1)<f(\varphi_2)$, whenever $\varphi_1(t)<\varphi_2(t)$, for all $t\in B$.

\begin{thm}\label{T1}
Let $X$ be an $L$-embedded space and $Y$ be a subspace of $X$, which is also $L$-embedded and $F\in\mc{B}(X)$. Then, 
\bla
\item for all monotone, coercive functions  $f:\ell_\iy(F)\ra\mb{R}_{\geq0}$, such that $r_f(.,F):X^{**}\ra\mb{R}_{\geq0}$ is $w^*$-lower semicontinuous, $\emph{Cent}_Y^f(F)\neq\es$.
\item for all weakly strictly monotone, coercive functions $f:\ell_\iy(F)\ra\mb{R}_{\geq0}$, such that $r_f(.,F):X^{**}\ra\mb{R}_{\geq0}$ is $w^*$-lower semicontinuous, $\emph{\textrm{Cent}}_Y^f(F)$ is weakly compact.
\el
\end{thm}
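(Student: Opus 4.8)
The plan is to mimic the standard proximinality argument for $L$-embedded spaces, minimizing $r_f(\cdot,F)$ over $Y$ and using the $L$-projection to delete the singular part of a weak$^*$ limit. Let $P\colon X^{**}\ra X$ be the $L$-projection witnessing that $X$ is $L$-embedded, so that $X^{**}=X\oplus_1\ker P$ and $\|w\|=\|Pw\|+\|w-Pw\|$ for all $w\in X^{**}$. For $(a)$ I would start from a minimizing net $(y_\al)\ci Y$ with $r_f(y_\al,F)\ra\tr{rad}_Y^f(F)$; coercivity of $f$ makes $(y_\al)$ bounded (if $\|y_\al\|\ra\iy$ then, as $F$ is bounded, $\|y_\al-a\|\ra\iy$ and $r_f(y_\al,F)\ra\iy$), so by Banach--Alaoglu a subnet converges weak$^*$ to some $w\in X^{**}$, and since $Y^{\perp\perp}$ is weak$^*$-closed, $w\in Y^{\perp\perp}$. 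Weak$^*$-lower semicontinuity of $r_f(\cdot,F)$ on $X^{**}$ gives $r_f(w,F)\le\tr{rad}_Y^f(F)$. Because each $a\in F$ lies in $X$, the $L$-decomposition yields $\|w-a\|=\|Pw-a\|+\|w-Pw\|\ge\|Pw-a\|$, so monotonicity of $f$ forces $r_f(Pw,F)\le r_f(w,F)\le\tr{rad}_Y^f(F)$.

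The decisive point is that $Pw\in Y$, and this is where the $L$-embeddedness of $Y$ must be used. Since $Y$ is $L$-embedded it has its own $L$-projection $P_Y\colon Y^{**}\ra Y$; under the identification $Y^{**}\cong Y^{\perp\perp}\ci X^{**}$, the intended route is to check that $P$ leaves $Y^{\perp\perp}$ invariant and that $P|_{Y^{\perp\perp}}$ is an $L$-projection with range $Y^{\perp\perp}\cap X=Y$. By the uniqueness of $L$-projections this restriction must equal $P_Y$, so $Pw=P_Yw\in Y$. Granting this, $Pw\in Y$ together with $r_f(Pw,F)\le\tr{rad}_Y^f(F)=\inf_{y\in Y}r_f(y,F)$ forces equality, hence $Pw\in\tr{Cent}_Y^f(F)$ and $(a)$ is proved.

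I expect this compatibility of the two $L$-structures to be the main obstacle. The $L$-projection $P$ of $X$ is typically not weak$^*$-continuous, so there is no formal reason for it to preserve the weak$^*$-closed subspace $Y^{\perp\perp}$; establishing the invariance $P(Y^{\perp\perp})\ci Y^{\perp\perp}$ (equivalently, that the singular part of $Y$ sits inside $\ker P$, i.e.\ $\tr{dist}(w,X)=\tr{dist}(w,Y)$ for $w\in Y^{\perp\perp}$) is exactly the content that the ``variation of \cite[Proposition~IV.1.2]{H}'' has to furnish, and it is the only step that simultaneously uses both hypotheses, namely that $X$ and $Y$ are each $L$-embedded. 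Everything else in $(a)$ is routine: boundedness from coercivity, lower semicontinuity, and the distance-reducing property of an $L$-projection.

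For $(b)$ I would feed the same construction a net $(z_\be)\ci\tr{Cent}_Y^f(F)$, bounded by coercivity, and take a weak$^*$-subnet limit $w\in Y^{\perp\perp}$, so that $r_f(w,F)\le\tr{rad}_Y^f(F)$ and $Pw\in Y$ as above. Put $d=\|w-Pw\|$; then $\|Pw-a\|=\|w-a\|-d$ for every $a\in F$. If $d>0$, every coordinate strictly decreases, so weak strict monotonicity gives $r_f(Pw,F)<r_f(w,F)\le\tr{rad}_Y^f(F)$, impossible for $Pw\in Y$; hence $d=0$ and $w=Pw\in Y^{\perp\perp}\cap X=Y$, so $w\in\tr{Cent}_Y^f(F)$. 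Since weak$^*$ convergence of the net $(z_\be)$ to a point of $Y$ is the same as weak convergence in $Y$, every net in $\tr{Cent}_Y^f(F)$ has a subnet converging weakly to a point of $\tr{Cent}_Y^f(F)$, which is weak compactness. The new ingredient for $(b)$ is that strict monotonicity annihilates the singular part of any weak$^*$ cluster point of the center set, converting $L$-embeddedness into weak compactness.
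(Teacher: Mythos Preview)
Your proposal is correct and follows essentially the same route as the paper: minimize $r_f(\cdot,F)$ over $Y^{\perp\perp}=\ov{Y}^{w^*}$ using coercivity, Banach--Alaoglu and $w^*$-lower semicontinuity, then push the minimizer back into $Y$ via the $L$-projection $P$ together with the fact $P(\ov{Y}^{w^*})\ci Y$ from \cite[IV.1.2]{H}, and for $(b)$ use weak strict monotonicity to kill the singular part $w-Pw$. The only cosmetic differences are that the paper packages the first step as an appeal to its Proposition~\ref{P1} rather than running the minimizing-net argument inline, and for $(b)$ it phrases the conclusion as the set identity $\tr{Cent}_Y^f(F)=\tr{Cent}_{\ov{Y}^{w^*}}^f(F)$ (the right-hand side being $w^*$-compact and contained in $X$) instead of your equivalent net formulation.
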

\begin{proof}
$(a)$. Let $f:\ell_\iy(F)\ra\mathbb{R}_{\geq0}$ be a monotone, coercive function such that $r_f(.,F)$ is $w^*$-lower semicontinuous. Let $P:X^{**}\ra X$ be the $L$-projection. Thus, from \cite[Theorem IV.1.2]{H}, we have $P(\overline{Y}^{w^*})\ci Y$. By Proposition~\ref{P1} there exists $y^{**}\in \ov{Y}^{w^*}$ such that $r_f(y^{**},F)=\textrm{rad}_{\ov{Y}^{w^*}}^f(F)$. Based on the monotonicity of $f$, we obtain,
\begin{equation}\label{E1}
\textrm{rad}_{Y}^f(F)\geq\tr{rad}_{\ov{Y}^{w^*}}^f(F)
=r_f(y^{**},F)
\geq r_f(Py^{**},F)
\geq\tr{rad}_Y^f(F).
\end{equation}
Since $Py^{**}\in Y$, our first assertion follows. 

$(b)$. Let $f:\ell_\iy(F)\ra\mathbb{R}_{\geq0}$ be a weakly strictly monotone, coercive function such that $r_f(.,F)$ is $w^*$-lower semicontinuous. By (\ref{E1}), we obtain a $y^{**}\in\ov{Y}^{w^*}$ such that, 
$r_f(Py^{**},F)=r_f(y^{**},F).$
Now for all $x\in F$,
\[
\|y^{**}-x\|=\|Py^{**}-x+(Id-P)y^{**}\|=\|Py^{**}-x\|+\|y^{**}-Py^{**}\|.
\]
This leads to $r_f(Py^{**},F)=f\left((\|Py^{**}-x\|+\|y^{**}-Py^{**}\|)_{x\in F}\right)$.

Since $f$ is weakly strictly monotone, we have $y^{**}=Py^{**}$ and consequently, $\textrm{Cent}_Y^f(F)=\textrm{Cent}_{\ov{Y}^{w^*}}^f(F)$. Hence, the $w^*$-closure of $\textrm{Cent}_Y^f(F)$ is contained in $X$, so it is weakly compact.
\end{proof}
As a corollary of the Theorem~\ref{T1}, we have,

\begin{Cor}\label{10}
\bla
\item Let $X$ be an $L$-embedded space and $Y$ be a subspace of $X$, which is also $L$-embedded. Suppose that $Y$ has $n.X.I.P.$ in $X$ for all $n\in\mb{N}$. Then, $Y$ is a central subspace of $X$.
\item Let $L_1(\Si,X)$ be an $L$-embedded space. Let $F\in \mc{B}(L_1(\Si,X))$ and $f:\ell_\iy(F)\ra\mb{R}_{\geq0}$ be a monotone, coercive function such that $r_f(.,F):L_1(\Si,X)^{**}\ra\mb{R}_{\geq0}$ is $w^*$-lower semicontinuous. Then, for all sub $\si$-algebra $\Si'\ci\Si$, $\emph{Cent}_{L_1(\Si',X)}^f(F)\neq\es$. 
\el
\end{Cor}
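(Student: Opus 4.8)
The plan is to obtain both parts as instances of Theorem~\ref{T1}(a), which provides nonempty restricted $f$-centers whenever an $L$-embedded subspace sits inside an $L$-embedded space. For $(a)$ I would recast the central-subspace condition as the attainment of a weighted restricted center at the correct radius; for $(b)$ the essential point is simply that $L_1(\Si',X)$ is again an $L$-embedded subspace, after which Theorem~\ref{T1}(a) applies directly.

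For part $(a)$, let $\{B_X(y_i,r_i)\}_{i=1}^n$ be balls with $y_1,\ldots,y_n\in Y$, $r_i>0$, and $x\in\cap_{i=1}^n B_X(y_i,r_i)$. Put $F=(y_i)_{i=1}^n$ and $\rho_i=1/r_i$. Since $\|x-y_i\|\leq r_i$ we get $r_\rho(x,F)=\max_i\rho_i\|x-y_i\|\leq 1$, hence $\tr{rad}_X^\rho(F)\leq 1$. The functional $a\mapsto\max_i\rho_i a_i$ is continuous and monotone, so the hypothesis that $Y$ has $n.X.I.P.$ in $X$ for all $n$ together with Theorem~\ref{T13} yields $\tr{rad}_Y^\rho(F)=\tr{rad}_X^\rho(F)\leq 1$. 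This functional is also coercive (as $F$ is finite and each $\rho_i>0$), and by Remark~\ref{R2}(b) the map $r_\rho(.,F)$ is $w^*$-lower semicontinuous on $X^{**}$; Theorem~\ref{T1}(a) then produces $y_0\in\tr{Cent}_Y^\rho(F)$, i.e. $\rho_i\|y_0-y_i\|\leq\tr{rad}_Y^\rho(F)\leq 1$ for every $i$. Thus $y_0\in\cap_{i=1}^n B_Y(y_i,r_i)$, which is exactly the central-subspace property.

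For part $(b)$, I would first record that the conditional expectation $E:L_1(\Si,X)\ra L_1(\Si',X)$ is a norm-$1$ projection, so $L_1(\Si',X)$ is $1$-complemented in $L_1(\Si,X)$; since $L_1(\Si,X)$ is $L$-embedded, so is $L_1(\Si',X)$, exactly as recalled in the introduction. Taking $L_1(\Si,X)$ as the ambient space and $L_1(\Si',X)$ as the subspace, the assumptions on $F\in\mc{B}(L_1(\Si,X))$ and on $f$ (monotone, coercive, with $r_f(.,F)$ being $w^*$-lower semicontinuous on $L_1(\Si,X)^{**}$) are precisely the hypotheses of Theorem~\ref{T1}(a), which immediately gives $\tr{Cent}_{L_1(\Si',X)}^f(F)\neq\es$.

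The only step needing genuine care is the verification in part $(a)$ that the weighted functional fits Theorem~\ref{T1}(a): one must confirm its monotonicity and coercivity and, crucially, the $w^*$-lower semicontinuity of $r_\rho(.,F)$, which is precisely what Remark~\ref{R2}(b) delivers. The $n.X.I.P.$ hypothesis is what lets Theorem~\ref{T13} convert the abstract attainment from Theorem~\ref{T1}(a) into the quantitative bound $\tr{rad}_Y^\rho(F)\leq 1$; without it, Theorem~\ref{T1}(a) would return a restricted center of $Y$ but give no control over its distances to the centers $y_i$, and the balls might fail to meet in $Y$.
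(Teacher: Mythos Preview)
Your proof is correct. Part $(b)$ is identical to the paper's argument: the conditional expectation makes $L_1(\Si',X)$ $1$-complemented, hence $L$-embedded by \cite[Proposition~IV.1.5]{H}, and Theorem~\ref{T1}(a) applies.

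For part $(a)$ you take a slightly different route from the paper. The paper argues as follows: since $X$ and $Y$ are both $L$-embedded, Theorem~\ref{T1}(a) (applied with the subspace equal to the whole space) shows that $X$ admits weighted Chebyshev centers for every $F\in\mc{F}(Y)$ and that $Y\in(GC)$; then Theorem~\ref{T9} directly yields that $Y$ is a central subspace. Your argument instead works ball-by-ball: given balls meeting in $X$, you pass to the weight $\rho_i=1/r_i$, use Theorem~\ref{T13} to transfer the bound $\tr{rad}_X^\rho(F)\le 1$ to $\tr{rad}_Y^\rho(F)\le 1$, and then invoke Theorem~\ref{T1}(a) once to produce a point of $Y$ realizing that radius, which lands in the intersection. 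Your approach is more self-contained in that it bypasses Theorem~\ref{T9} (and the reduction to equal radii via Theorem~\ref{T3} that underlies it), while the paper's approach packages the argument at a higher level and makes the role of $Y\in(GC)$ explicit. Both rest on the same two ingredients, Theorem~\ref{T1}(a) and the $n.X.I.P.$ hypothesis, and neither requires anything the other does not.
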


\begin{proof}
$(a).$ From our assumption, it follows that both $X$ and $Y$ admit weighted Chebyshev center for finite subsets of $Y$. The result now follows from Theorem~\ref{T9}, and Theorem~\ref{T1}.

$(b).$ Since $L_1(\Si',X)$ is one complemented in $L_1(\Si,X)$, it follows from \cite[Proposition~IV.1.5]{H}, that $L_1(\Si',X)$ is also $L$-embedded. Thus, our conclusion follows from Theorem \ref{T1}.
\end{proof}

Similar arguments stated in the proof of Theorem~\ref{T1}(b) also lead to the following.

\begin{lemma}\label{L2}
Let $X$ be an $L$-embedded space and $Y$ be a subspace of $X$, which is also $L$-embedded. Let $F\in\mc{B}(X)$ and $f:\ell_\iy(F)\ra\mb{R}_{\geq 0}$ be a strictly monotone, coercive function such that $r_f(.,F):X^{**}\ra\mb{R}_{\geq0}$ is $w^*$-lower semicontinuous,. Suppose that $(y_n)$ is a sequence in $Y$ such that $\lim_n r_f(y_n,F)=\emph{rad}_Y^f(F)$. Then, $(y_n)$ is relatively weakly compact. 
\end{lemma}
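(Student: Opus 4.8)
The plan is to adapt the $L$-projection computation from the proof of Theorem~\ref{T1}(b) to the $w^*$-cluster points of the minimizing sequence, and to combine it with the standard bidual characterization of relative weak compactness. First I would record that since $f$ is coercive, $F$ is bounded, and $r_f(y_n,F)\ra\tr{rad}_Y^f(F)<\iy$, the estimate $\|y_n-a\|\geq\|y_n\|-\sup_{a\in F}\|a\|$ prevents $(y_n)$ from being unbounded; hence $A:=\{y_n:n\in\mb{N}\}$ is a bounded subset of $X\ci X^{**}$. I would then use the classical fact that a bounded set $A\ci X$ is relatively weakly compact if and only if its $w^*$-closure $\ov{A}^{w^*}$ (taken in $X^{**}$) is contained in the canonical image of $X$. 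Thus it suffices to prove $\ov{A}^{w^*}\ci Y$.

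For the core step, let $P:X^{**}\ra X$ be the $L$-projection. As in Theorem~\ref{T1}, $P(\ov{Y}^{w^*})\ci Y$, and for $\xi\in\ov{Y}^{w^*}$ and $a\in F\ci X$ the $L$-decomposition gives $\|\xi-a\|=\|P\xi-a\|+\|\xi-P\xi\|$, so that $\|P\xi-a\|\leq\|\xi-a\|$ and, by monotonicity of $f$, $r_f(P\xi,F)\leq r_f(\xi,F)$. Now take any $w^*$-cluster point $\xi$ of $(y_n)$; since $A\ci Y$ we have $\xi\in\ov{Y}^{w^*}$ and $P\xi\in Y$, whence $\tr{rad}_Y^f(F)\leq r_f(P\xi,F)\leq r_f(\xi,F)$. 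Choosing a subnet $(y_{n_\al})$ with $n_\al\ra\iy$ and $y_{n_\al}\ra\xi$ in the $w^*$-topology, the $w^*$-lower semicontinuity of $r_f(\cdot,F)$ gives $r_f(\xi,F)\leq\liminf_\al r_f(y_{n_\al},F)=\tr{rad}_Y^f(F)$, the last equality holding because $(y_{n_\al})$ is a subnet of a convergent sequence. Consequently all of these inequalities collapse to equalities; in particular $r_f(P\xi,F)=r_f(\xi,F)$.

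Finally I would invoke strict monotonicity to conclude $\xi=P\xi$. If $\xi\neq P\xi$, put $c:=\|\xi-P\xi\|>0$; the $L$-decomposition yields $\|P\xi-a\|=\|\xi-a\|-c<\|\xi-a\|$ for every $a\in F$, so the tuples $(\|P\xi-a\|)_{a\in F}$ and $(\|\xi-a\|)_{a\in F}$ are comparable and distinct, and strict monotonicity forces $r_f(P\xi,F)<r_f(\xi,F)$, contradicting the equality just obtained. Hence $\xi=P\xi\in P(\ov{Y}^{w^*})\ci Y$. Since $A\ci Y$ and, the $w^*$-topology being Hausdorff, every point of $\ov{A}^{w^*}$ is either a term of $(y_n)$ or a $w^*$-cluster point of it, we obtain $\ov{A}^{w^*}\ci Y\ci X$, and relative weak compactness of $(y_n)$ follows from the characterization in the first paragraph.

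The main obstacle I anticipate is the bookkeeping with cluster points: one must ensure that the relevant $w^*$-limit points of $A$ genuinely arise as limits of subnets whose indices tend to infinity (so that $r_f$ attains the value $\tr{rad}_Y^f(F)$ in the limit), and that $\ov{A}^{w^*}$ contributes nothing beyond $A$ and these cluster points. Handling this rests on the Hausdorff/$T_1$ character of the $w^*$-topology (a limit point of a set is met infinitely often by every neighbourhood) together with the trivial case where $A$ is finite; once this is settled, the remainder is a direct transcription of the Theorem~\ref{T1}(b) computation.
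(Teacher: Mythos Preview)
Your proposal is correct and follows exactly the route the paper indicates: the paper gives no separate proof of Lemma~\ref{L2} but simply points to the computation in Theorem~\ref{T1}(b), and your argument is precisely that computation transported from $\tr{Cent}_{\ov{Y}^{w^*}}^f(F)$ to the $w^*$-cluster points of the minimizing sequence, followed by the standard ``$\ov{A}^{w^*}\subseteq X$'' criterion for relative weak compactness. The bookkeeping you flag (that limit points of the countable set $A$ in the $T_1$ $w^*$-topology are genuine cluster points of the sequence, so that the hypothesis $\lim_n r_f(y_n,F)=\tr{rad}_Y^f(F)$ passes to subnets) is routine and your outline handles it correctly.
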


As a consequence of Lemma~\ref{L2}, we have the following.

\begin{thm}\label{T4}
Let $X$ be a Banach space such that $L_1(\Si,X)$ is $L$-embedded. Let $\{\Si_n\}_{i=1}^\iy$ be an increasing sequence of sub $\si$-algebras. Let $\Si_\iy$ be the $\si$-algebra generated by $\cup_{n\in\mathbb{N}} \Si_n$. Let $F\in \mc{B}(L_1(\Si,X))$ and $f:\ell_\iy(F)\ra\mb{R}_{\geq 0}$ be a  strictly monotone, coercive function such that $r_f(.,F):L_1(\Si,X)^{**}\ra\mb{R}_{\geq0}$ is $w^*$-lower semicontinuous. Suppose that $\emph{rad}_{L_1(\Si_n,X)}^f(F)=r_f(f_n,F)$ for some $f_n\in L_1(\Si_n,X)$. Then, $(f_n)$ is relatively weakly compact, and any weak limit point of this sequence belongs to $\emph{Cent}_{L_1(\Si_\iy, X)}^f(F)$.
\end{thm}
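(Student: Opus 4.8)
The plan is to take $Y=L_1(\Si_\iy,X)$, regarded as a closed subspace of $L_1(\Si,X)$, and to show that $(f_n)$ is a \emph{minimizing sequence} for $\tr{rad}_Y^f(F)$; the two assertions then fall out of Lemma~\ref{L2} together with the $w^*$-lower semicontinuity hypothesis. First I would record that $Y$ is itself $L$-embedded: since $\Si_\iy\ci\Si$, the conditional expectation $E_\iy:L_1(\Si,X)\ra L_1(\Si_\iy,X)$ is a norm-one projection, so $Y$ is $1$-complemented in the $L$-embedded space $L_1(\Si,X)$ and hence $L$-embedded (a $1$-complemented subspace of an $L$-embedded space is $L$-embedded, see \cite[Theorem~IV.1.5]{H}). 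Thus $(L_1(\Si,X),Y)$ satisfies the standing hypotheses of Lemma~\ref{L2} with the same $F$ and $f$.

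The heart of the matter is the claim $\lim_n r_f(f_n,F)=\tr{rad}_Y^f(F)$. Write $R=\tr{rad}_Y^f(F)$. One inequality is immediate: because $\Si_n$ increases we have $L_1(\Si_1,X)\ci L_1(\Si_2,X)\ci\cdots\ci Y$, so the numbers $r_f(f_n,F)=\tr{rad}_{L_1(\Si_n,X)}^f(F)$ decrease and remain $\geq R$; hence $L:=\lim_n r_f(f_n,F)$ exists with $L\geq R$. For the reverse inequality I would invoke that $\cup_n L_1(\Si_n,X)$ is norm-dense in $Y$: for $g\in Y$ the conditional expectations $E_n:L_1(\Si,X)\ra L_1(\Si_n,X)$ satisfy $E_n g\ra g$ in norm, whence $\big|\,\|E_n g-a\|-\|g-a\|\,\big|\leq\|E_n g-g\|\ra 0$ uniformly over $a\in F$, i.e. $(\|E_n g-a\|)_{a\in F}\ra(\|g-a\|)_{a\in F}$ in $\ell_\iy(F)$. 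Continuity of $r_f(\cdot,F)$ (Remark~\ref{R2}(a)) then gives $r_f(E_n g,F)\ra r_f(g,F)$, and since $\tr{rad}_{L_1(\Si_n,X)}^f(F)\leq r_f(E_n g,F)$ we obtain $L\leq r_f(g,F)$; taking the infimum over $g\in Y$ yields $L\leq R$. Therefore $L=R$ and $(f_n)$ is a minimizing sequence for $\tr{rad}_Y^f(F)$.

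With this established, Lemma~\ref{L2}, applied to $Y=L_1(\Si_\iy,X)$, gives at once that $(f_n)$ is relatively weakly compact. For the second conclusion, suppose $f_{n_k}\ra g_0$ weakly; then $f_{n_k}\ra g_0$ in the $w^*$-topology of $L_1(\Si,X)^{**}$, so the assumed $w^*$-lower semicontinuity of $r_f(\cdot,F)$ gives $r_f(g_0,F)\leq\liminf_k r_f(f_{n_k},F)=L=R$. Since $g_0$ lies in the weakly closed subspace $Y$, we also have $r_f(g_0,F)\geq R$, and hence $r_f(g_0,F)=R$, that is, $g_0\in\tr{Cent}_{L_1(\Si_\iy,X)}^f(F)$.

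I expect the main obstacle to be the second paragraph, and within it the step $r_f(E_n g,F)\ra r_f(g,F)$: this is precisely where continuity of $r_f(\cdot,F)$ is used rather than mere monotonicity, so one must either lean on the continuity built into Remark~\ref{R2}(a) or argue directly that uniform convergence of the radius-vectors $(\|E_n g-a\|)_{a\in F}$ together with monotonicity of $f$ suffices. The density $\overline{\cup_n L_1(\Si_n,X)}=L_1(\Si_\iy,X)$ is the standard vector-valued martingale-convergence (equivalently, approximation-by-$\Si_\iy$-simple-functions) fact and should be cited rather than reproved. Everything else — the transfer of $L$-embeddedness, the monotone behaviour of the radii, and the lower-semicontinuity step — is routine once the minimizing property is in place.
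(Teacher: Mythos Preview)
Your proposal is correct and follows essentially the same route as the paper's own proof: both arguments establish that $(f_n)$ is a minimizing sequence for $\tr{rad}_{L_1(\Si_\iy,X)}^f(F)$ via the conditional expectations $E_n$ and the convergence $E_ng\ra g$, then invoke Lemma~\ref{L2} and the $w^*$-lower semicontinuity of $r_f(\cdot,F)$. Your write-up is slightly more explicit in recording that $L_1(\Si_\iy,X)$ is $L$-embedded (needed to apply Lemma~\ref{L2}) and in flagging the role of continuity of $r_f(\cdot,F)$ at the step $r_f(E_ng,F)\ra r_f(g,F)$; the paper simply writes $\lim_n r_f(E_n(g),F)\leq r_f(g,F)$ without further comment.
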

\begin{proof}
We follow the techniques similar to those used in \cite[Theorem~12]{T}. Let $E_n$ be the conditional expectation projection of $L_1(\Si_\iy,X)$ onto $L_1(\Si_n,X)$ for all $n\in\mathbb{N}$. It is known that $E_n(g)\ra g$ for all $g\in L_1(\Si_\iy,X)$. We have for any $n\in\mb{N}$,
$\textrm{rad}_{L_1(\Si_\iy,X)}^f(F)\leq\textrm{rad}_{L_1(\Si_n,X)}^f(F)\leq r_f(f_n,F)$
and hence $\textrm{rad}_{L_1(\Si_\iy,X)}^f(F)\leq\liminf_n r_f(f_n,F)$.

Now for any $g\in L_1(\Si_\iy,X)$, we have,
\beqa
\limsup_n~ \textrm{rad}_{L_1(\Si_n,X)}^f(F)\leq \lim_n r_f(E_n(g),F)\leq r_f(g,F).
\eeqa

Hence, $\limsup_n r_f(f_n,F)=\limsup_n \textrm{rad}_{L_1(\Si_n,X)}^f(F)\leq\textrm{rad}_{L_1(\Si_\iy,X)}^f(F)$.

Thus, $\textrm{rad}_{L_1(\Si_\iy,X)}^f(F)=\lim_n r_f(f_n,F)=\lim_n \textrm{rad}_{L_1(\Si_n,X)}^f(F)$. Hence, by Lemma \ref{L2}, $(f_n)$ is relatively weakly compact. Using the $w^*$-lower semi-continuity of $r_f(.,F)$, we have the weak limit point of $(f_n)$ belonging to $\textrm{Cent}_{L_1(\Si_\iy, X)}^f(F)$.
\end{proof}

Our next observation follows from \cite[Theorem~5.1]{LV} and the existence of the conditional expectation operator.

\begin{thm}\label{T2}
Let $X$ denote a Banach space such that $X^*$ has $RNP$. Let $(\Om,\Si,\mu)$ be a finite measure space. Let $F\in \mc{B}(L_1(\Si,X^*))$ and $f:\ell_\iy(F)\ra\mb{R}_{\geq0}$ be a monotone, coercive function such that $r_f(.,F): L_1(\Si,X^*)^{**}\ra\mb{R}_{\geq0}$ is $w^*$-lower semicontinuous. Then, for all sub $\si$-algebra $\Si'\ci\Si$, $\emph{Cent}_{L_1(\Si',X^*)}(F)\neq\es$.
\end{thm}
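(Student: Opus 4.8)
The plan is to recognize that the conclusion is exactly what Theorem~\ref{T1}(a) delivers, once its two structural hypotheses are verified for the pair $L_1(\Si',X^*)\ci L_1(\Si,X^*)$. Concretely, I would apply Theorem~\ref{T1}(a) with the ambient space $L_1(\Si,X^*)$ playing the role of $X$ and the subspace $L_1(\Si',X^*)$ playing the role of $Y$. The assumptions that $F\in\mc{B}(L_1(\Si,X^*))$, that $f:\ell_\iy(F)\ra\mb{R}_{\geq0}$ is monotone and coercive, and that $r_f(\cdot,F)$ is $w^*$-lower semicontinuous on $L_1(\Si,X^*)^{**}$ are precisely those already imposed in the statement, so nothing further is needed on the side of $F$ and $f$. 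The entire content of the proof therefore reduces to showing that both $L_1(\Si,X^*)$ and $L_1(\Si',X^*)$ are $L$-embedded.

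For the ambient space, I would invoke \cite[Theorem~5.1]{LV}: since $X^*$ has $RNP$, the space $L_1(\Si,X^*)$ is $L$-embedded. The subspace then comes essentially for free from the conditional expectation machinery recalled in the introduction. The operator $E:L_1(\Si,X^*)\ra L_1(\Si',X^*)$ is a norm-$1$ projection onto $L_1(\Si',X^*)$, so $L_1(\Si',X^*)$ is a $1$-complemented subspace of the $L$-embedded space $L_1(\Si,X^*)$, and hence is itself $L$-embedded by \cite[Theorem~IV.1.5]{H}. With both spaces $L$-embedded, Theorem~\ref{T1}(a) yields $\tr{Cent}_{L_1(\Si',X^*)}^f(F)\neq\es$, and since $\Si'\ci\Si$ was arbitrary the claim holds for every sub $\si$-algebra.

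I expect the main obstacle to be the first structural step, namely extracting the $L$-embeddedness of $L_1(\Si,X^*)$ from the sole hypothesis that $X^*$ has $RNP$; this is exactly where \cite[Theorem~5.1]{LV} carries the analytic weight, and it is worth checking that the cited result applies verbatim in the Bochner setting over a finite measure space rather than only for scalar data. By contrast, the passage to the subspace is purely formal: conditional expectation supplies the norm-$1$ projection and stability of $L$-embeddedness under $1$-complementation does the rest, so no genuinely new estimate is required once the ambient space is handled.
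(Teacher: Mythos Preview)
Your proposal is correct and follows essentially the same route as the paper, whose entire proof is the single remark that the result ``follows from \cite[Theorem~5.1]{LV} and the existence of the conditional expectation operator.'' You have simply made that sentence explicit: \cite[Theorem~5.1]{LV} supplies the structural property of $L_1(\Si,X^*)$ needed (using that $X^*$, as a dual space with RNP, satisfies Vesel\'{y}'s hypotheses), the conditional expectation exhibits $L_1(\Si',X^*)$ as a $1$-complemented subspace so that the same structure passes down via \cite[Theorem~IV.1.5]{H}, and Theorem~\ref{T1}(a) (equivalently, the mechanism behind Corollary~\ref{10}(b)) then yields the nonempty center set.
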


Let us recall that $\mc{WC}(X)$ denotes the set of all weakly compact subsets of $X$. Additionally, let us also recall the following theorem of Dunford (see \cite[pg. 101]{D}).

\begin{thm}\label{T14}
Let $X$ be a reflexive Banach space. Then, $K\ci L_1(\mu,X)$ is relatively weakly compact if and only if K is bounded and uniformly integrable.
\end{thm}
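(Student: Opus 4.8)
The statement is the classical theorem of Dunford, so the plan is to reduce relative weak compactness to sequential weak compactness by the Eberlein--\v{S}mulian theorem and then treat the two implications separately, exploiting two features of the reflexive setting: for a finite measure $\mu$ and reflexive $X$ the intermediate space $L_2(\mu,X)$ is reflexive, and $L_1(\mu,X)^*=L_\iy(\mu,X^*)$, the latter being available because $X^*$, being reflexive, has the Radon--Nikod\'{y}m property.

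For necessity, suppose $K$ is relatively weakly compact. Boundedness is immediate from the uniform boundedness principle, as a weakly compact set is weakly bounded. For uniform integrability I would argue by contradiction: were it to fail, one produces $\e>0$, functions $f_n\in K$ and measurable sets $E_n$ with $\mu(E_n)\ra 0$ but $\int_{E_n}\|f_n\|\,d\mu\ge\e$; choosing norming functionals $x_n^*\in S_{X^*}$ and testing against elements of $L_\iy(\mu,X^*)$ supported on the $E_n$ produces, after a gliding-hump extraction, a subsequence behaving like the $\ell_1$ unit vector basis, which has no weakly convergent subsequence and thus contradicts relative weak compactness. Equivalently, one may phrase this via the uniform countable additivity (uniform $\mu$-continuity) of the vector measures $E\mapsto\int_E f\,d\mu$, $f\in K$, a property that every relatively weakly compact family must enjoy and which is exactly uniform integrability.

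The substantive direction is sufficiency, and here lies the main work. Assume $K$ is bounded and uniformly integrable; by Eberlein--\v{S}mulian it suffices to extract a weakly convergent subsequence from an arbitrary sequence $(f_n)\ci K$. For each level $R>0$ put $f_n^R=f_n\,\mathbf{1}_{\{\|f_n\|\le R\}}$. Uniform integrability yields $\sup_n\|f_n-f_n^R\|_1\ra 0$ as $R\ra\iy$. Since $\mu$ is finite and each $f_n^R$ is bounded in norm by $R$, the truncations at a fixed level $R$ form a bounded sequence in the reflexive space $L_2(\mu,X)$, hence have a weakly convergent subsequence there; and because $L_\iy(\mu,X^*)\ci L_2(\mu,X^*)=L_2(\mu,X)^*$, weak convergence in $L_2(\mu,X)$ forces weak convergence in $L_1(\mu,X)$. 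A diagonal procedure over $R=1,2,3,\dots$ then produces a single subsequence, still denoted $(f_n)$, for which $(f_n^R)_n$ converges weakly in $L_1(\mu,X)$ to some $h_R$ for every $R$.

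It remains to assemble these pieces. The limits $h_R$ are Cauchy in $L_1$-norm, since $\|h_R-h_{R'}\|_1\le\liminf_n\|f_n^R-f_n^{R'}\|_1$ is uniformly small by uniform integrability; let $h$ be their norm limit. Invoking $L_1(\mu,X)^*=L_\iy(\mu,X^*)$, for $\varphi\in L_\iy(\mu,X^*)$ one splits $\langle\varphi,f_n-h\rangle$ into $\langle\varphi,f_n-f_n^R\rangle+\langle\varphi,f_n^R-h_R\rangle+\langle\varphi,h_R-h\rangle$: the first and third terms are uniformly small for large $R$ (by uniform integrability and by $h_R\ra h$ in norm, respectively), while for fixed $R$ the middle term tends to $0$ in $n$ by the weak convergence of $(f_n^R)_n$ to $h_R$. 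Choosing $R$ large and then $n$ large shows $f_n$ converges weakly to $h$, completing the proof. I expect the genuine obstacle to be exactly this final synthesis: controlling the truncation error \emph{uniformly in $n$} and correctly invoking the duality $L_1(\mu,X)^*=L_\iy(\mu,X^*)$ (hence the reflexivity, equivalently the Radon--Nikod\'{y}m, hypothesis on $X$), rather than the routine reflexive extraction carried out at each fixed truncation level.
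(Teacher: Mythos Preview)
The paper does not prove this theorem at all: it is stated as a quoted result of Dunford with a reference to \cite[pg.~101]{D}, and the paper only uses it (together with the remark that the necessity direction holds for arbitrary $X$). So there is no ``paper's own proof'' to compare against; your write-up is a self-contained argument where the paper simply invokes the literature.

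That said, your argument is essentially correct and is one of the standard routes to Dunford's theorem. A couple of remarks. First, you implicitly use that $\mu$ is finite (for $L_\iy(\mu,X^*)\ci L_2(\mu,X^*)$ and for the truncations to land in $L_2$); this is consistent with the paper's standing hypothesis but is worth making explicit since the statement as printed does not say so. Second, the reflexivity of $L_2(\mu,X)$ that you need follows exactly from the duality chain $L_2(\mu,X)^*=L_2(\mu,X^*)$ and $L_2(\mu,X^*)^*=L_2(\mu,X^{**})=L_2(\mu,X)$, valid because $X^*$ (being reflexive) has RNP; you allude to this but it is the crux of why reflexivity of $X$ enters. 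Finally, your necessity sketch via a gliding-hump/$\ell_1$ argument is a bit telegraphic; the cleaner formulation you also mention---that relative weak compactness forces uniform $\mu$-continuity of the indefinite integrals $E\mapsto\int_E f\,d\mu$, which is uniform integrability---is the one that works for arbitrary $X$ and matches the paper's subsequent remark.
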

It is apparent from the proof of the above Theorem that for any Banach space $X$, a relatively weakly compact subset of $L_1(\mu,X)$ is bounded and uniformly integrable.

\begin{thm}\label{T11}
	Let $Y$ be a reflexive subspace of $X$ and $(\Om,\Si,\mu)$ be a finite measure space. 
	\bla
	\item  Then for $F\in \mc{WC}(L_1(\Si,X))$, for all sub $\si$-algebra $\Si'\ci\Si$ and for all monotone, coercive function $f:\ell_\iy(F)\ra\mb{R}_{\geq0}$ such that $r_f(.,F):L_1(\Si,X)\ra\mb{R}_{\geq0}$ is $w$-lower semicontinuous, $\emph{Cent}_{L_1(\Si',Y)}^f(F)\neq\es$.
	\item  Then for all $F\in \mc{B}(L_\iy(\Si,X))$, monotone, coercive function $f:\ell_\iy(F)\ra\mb{R}_{\geq0}$ such that $r_f(.,F):L_1(\Si,X^*)^*\ra\mb{R}_{\geq0}$ is $w^*$-lower semicontinuous,
	$\emph{Cent}_{L_\iy(\Si,Y)}^f(F)\neq\es$.
	\el
\end{thm}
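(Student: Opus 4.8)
For part (a) I would prove the existence of a center by a direct variational argument powered by Dunford's theorem (Theorem~\ref{T14}). Write $\rho=\tr{rad}_{L_1(\Si',Y)}^f(F)$ and pick $(g_n)\ci L_1(\Si',Y)$ with $r_f(g_n,F)\ra\rho$. Since $F$ is weakly compact it is bounded, and coercivity of $f$ forces $\sup_{a\in F}\|g_n-a\|_1$ to stay bounded, so $(g_n)$ is bounded in $L_1(\Si,Y)$. The subspace $L_1(\Si',Y)$ is norm closed, hence weakly closed, so any weak cluster point of $(g_n)$ lands back in it, and then $w$-lower semicontinuity of $r_f(\cdot,F)$ would give $r_f(g_0,F)\leq\rho$, i.e. $g_0\in\tr{Cent}_{L_1(\Si',Y)}^f(F)$. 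The whole issue is therefore to produce such a weak cluster point, i.e. to make $(g_n)$ relatively weakly compact. As $F$ is weakly compact, it is uniformly integrable by the easy half of Theorem~\ref{T14}; since $Y$ is reflexive, the hard half says a bounded uniformly integrable sequence in $L_1(\Si,Y)$ is relatively weakly compact. Boundedness is in hand, so it remains to arrange uniform integrability of the minimizing sequence.

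The key device for this is truncation at a high level. Fix $c>0$ and set $A_n^c=\{t:\|g_n(t)\|>c\}$; because $g_n$ is $\Si'$-measurable we have $A_n^c\in\Si'$, and by Markov's inequality $\mu(A_n^c)$ is uniformly small once $c$ is large. Then $\tilde g_n=g_n\mathbf{1}_{\Om\sm A_n^c}\in L_1(\Si',Y)$ is uniformly bounded by $c$, hence uniformly integrable, and for every $a\in F$,
$$\|\tilde g_n-a\|_1-\|g_n-a\|_1=\int_{A_n^c}(\|a\|-\|g_n-a\|)\,d\mu\leq\int_{A_n^c}\|a\|\,d\mu,$$
which is uniformly small by uniform integrability of $F$. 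Monotonicity of $f$ then gives $r_f(\tilde g_n,F)\leq f\big((\|g_n-a\|_1)_{a\in F}+\eta_c\mathbf{1}\big)$ with $\eta_c\ra 0$, so the uniformly bounded (hence relatively weakly compact) sequence $(\tilde g_n)$ remains minimizing in the limit $c\ra\iy$. Extracting a weak cluster point and invoking $w$-lower semicontinuity finishes (a). The main obstacle is exactly this uniform-integrability step: an arbitrary minimizing sequence need not be relatively weakly compact — already for $F=\{0\}$ the sublevel sets of $r_f(\cdot,F)$ are $L_1$-balls — so it is the uniform integrability of the weakly compact target $F$, together with the $\Si'$-measurability of the level sets $A_n^c$, that saves the argument; the one delicate point is the bookkeeping (continuity of $f$ on the relevant bounded range) that keeps $(\tilde g_n)$ minimizing as $c\ra\iy$.

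For part (b) I would run the same scheme in a dual pairing. Take $(h_n)\ci L_\iy(\Si,Y)$ with $r_f(h_n,F)\ra\tr{rad}_{L_\iy(\Si,Y)}^f(F)$; coercivity of $f$ and boundedness of $F\in\mc{B}(L_\iy(\Si,X))$ make $(h_n)$ bounded in $L_\iy$. View $L_\iy(\Si,Y)\ci L_\iy(\Si,X)\ci L_1(\Si,X^*)^*$ via the pairing $\langle h,\phi\rangle=\int_\Om\langle\phi(t),h(t)\rangle\,d\mu$. By Banach--Alaoglu the bounded net $(h_n)$ has a $w^*$-cluster point $H\in L_1(\Si,X^*)^*$, and the assumed $w^*$-lower semicontinuity of $r_f(\cdot,F)$ gives $r_f(H,F)\leq\tr{rad}_{L_\iy(\Si,Y)}^f(F)$.

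It remains — and this is the crux of (b) — to show $H\in L_\iy(\Si,Y)$, i.e. that $L_\iy(\Si,Y)$ is $w^*$-closed in $L_1(\Si,X^*)^*$. Here reflexivity of $Y$ enters: a reflexive subspace is $w^*$-closed in its bidual, so $Y^{\perp\perp}=Y$ inside $X^{**}$ (by Krein--Smulian, since $B_Y=Y\cap B_{X^{**}}$ is $w^*$-compact). For each $n$ and each $\phi\in L_1(\Si,Y^\perp)$ one has $\langle h_n,\phi\rangle=0$, hence $\langle H,\phi\rangle=0$; identifying $H$ with a weak-$*$ measurable essentially bounded $X^{**}$-valued function and varying $\phi$ yields $\langle y^*,H(t)\rangle=0$ a.e. for every $y^*\in Y^\perp$, whence $H(t)\in Y^{\perp\perp}=Y$ a.e., so $H\in\tr{Cent}_{L_\iy(\Si,Y)}^f(F)$. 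The technical points to watch are the identification of $L_1(\Si,X^*)^*$ with weak-$*$ measurable $L_\iy(\Si,X^{**})$ and the measurable selection converting ``annihilates $L_1(\Si,Y^\perp)$'' into ``takes values in $Y$ a.e.'', both handled after reducing to separable reflexive $Y$ — automatic, since strongly measurable functions have essentially separable range.
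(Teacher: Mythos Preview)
Your overall strategy for (a) matches the paper's: take a minimizing sequence $(g_n)$ in $L_1(\Si',Y)$, modify it to a uniformly integrable minimizing sequence, invoke Dunford's theorem (Theorem~\ref{T14}) via reflexivity of $Y$ to obtain relative weak compactness, and finish with $w$-lower semicontinuity of $r_f(\cdot,F)$. Both arguments also use the uniform integrability of the weakly compact set $F$ to control the error coming from the modification.

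The substantive difference is the modification device. The paper applies the vector-valued Kadec--Pe\l czy\'nski--Rosenthal splitting lemma \cite[Lemma~2.1.3]{C}: one obtains in a single stroke a subsequence $(g_{n_k})$ and pairwise \emph{disjoint} sets $A_k\in\Si'$ with $(g_{n_k}\chi_{A_k^c})$ uniformly integrable; disjointness forces $\mu(A_k)\ra 0$, whence $\sup_{h\in F}\int_{A_k}\|h\|\,d\mu\ra 0$ by uniform integrability of $F$, so this \emph{single} sequence is simultaneously uniformly integrable and minimizing. Your Markov level-set truncation does not achieve this. For each fixed $c$ the sequence $(\tilde g_n)_n$ is bounded by $c$ in $L_\iy$ and hence uniformly integrable, but only $(\rho+\eta_c)$-minimizing; passing to a weak cluster point $g^c$ and then letting $c\ra\iy$ merely produces a \emph{new} minimizing sequence $(g^c)_c$ in $L_1(\Si',Y)$, for which you have no uniform integrability (the only a priori bound is $\|g^c\|_\iy\leq c\ra\iy$). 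The two limits $n\ra\iy$ and $c\ra\iy$ cannot be decoupled as you suggest --- the subsequence-splitting lemma is precisely the tool that fuses them into one sequence. (The ``delicate bookkeeping'' about continuity of $f$ that you flag is real, and the paper's ``it is easy to see'' glosses over the same point.)

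For (b) your plan is correct and agrees with the paper's, which simply invokes \cite[Proposition~3.1]{TP1} for the $w^*$-closedness of $L_\iy(\Si,Y)$ in $L_1(\Si,X^*)^*$ when $Y$ is reflexive; your annihilator computation is essentially a sketch of that proposition, so here the two approaches coincide.
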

\begin{proof}
	$(a).$  Let  $(g_n)\ci L_1(\Si',Y)$ be a  such that $r_f(g_n, F)\ra\textrm{rad}^f_{L_1(\Si',Y)}(F)$. Clearly, $(g_n)$ is bounded. Hence by a vector-valued version of  Kadec-Pe\l czyn\`ski-Rosenthal theorem (\cite[Lemma~2.1.3]{C}), there exists a subsequence
	$(g_{n_k})$ of $(g_n)$ and a sequence of pairwise disjoint sets $(A_k)\ci\Si'$ such that $(g_{n_k}\chi_{A^c_k})$ is uniformly integrable in $L_1(\Si',Y)$. Since, $\sum_{k=1}^\iy\mu(A_k)=\mu(\cup_{k=1}^\iy A_k)\leq\mu(\Om)<\iy$, we have $\mu(A_k)\ra 0$. 
	
	Let us choose $h\in F$ then,
	\[
	\|g_{n_k}\chi_{A_k^c}-h\|_1=\int_{A_k}\|h\|d\mu+\int_{A_k^c}\|g_{n_k}-h\|d\mu
	\leq\int_{A_k}\|h\|d\mu+\|g_{n_k}-h\|_1.
	\]
	As $F$ is uniformly integrable, we get $\lim_k \int_{A_k}\|h\|d\mu=0$ uniformly for all $h\in F$. Now, it is easy to see that  $r_f(g_{n_k}\chi_{A_k^c},F)\ra\textrm{rad}^f_{L_1(\Si',Y)}(F)$. Also since $(g_{n_k}\chi_{A_k^c})$ is a bounded sequence in $L_1(\Si',Y)$, by Theorem~\ref{T14}, $(g_{n_k}\chi_{A_k^c})$ is relatively weakly compact in $L_1(\Si',Y)$. Let us denote by $(g_{n_{k(j)}})$, a weakly convergent subsequence of $(g_{n_{k}}\chi_{A_k^c})$ converging weakly to $g\in L_1(\Si',Y)$. Then $r_f(g,F)\leq\lim\inf_j r_f(g_{n_{k(j)}},F)=\tr{rad}_{L_1(\Si',Y)}^f(F)$. Hence $r_f(g,F)=\textrm{rad}^f_{L_1(\Si',Y)}(F)$.
	
	$(b).$ We consider $L_\iy(\Si,X)$ to be canonically embedded in $L_1(\Si,X^*)^*$. It follows from from Proposition $3.1$ of \cite{TP1} that $L_\iy(\Si,Y)$ is $w^*$-closed in $L_1(\Si,X^*)^*$. Hence by the $w^*$-lower semicontinuity of $r_f(.,F)$, we have an $g\in L_\iy(\Si,Y)$ such that $r_f(g,F)=\textrm{rad}_{L_\iy(\Si,Y)}^f(F)$.
\end{proof}

\begin{Rem}
Using similar arguments, it can be shown that for a reflexive subspace $Y$ of $X$ and for a sub-$\si$-algebra $\Si'\ci\Si$, $(L_1(\Si,X), B_{L_1(\Si',Y)},\mc{WC}(L_1(\Si,X)))$ has \emph{\bf {rcp}}.
\end{Rem}

Let $\mr{F}$ be the family of all monotone functions $f:(\mb{R}^n_{\geq 0},\|.\|_\iy)\ra \mb{R}_{\geq 0}$, where $n\geq 1$.

\begin{Pro}\label{P2}
Let $P$ be a projection on $X$ of norm-$1$ and $Y=\ker (P)$. Let $Z\ci P(X)$ be a subspace. If $(X,Y+Z,\mc{F}(X))$ has \emph{\bf $\mr{F}$-rcp}, then $(P(X),Z,\mc{F}(P(X)))$ has \emph{\bf $\mr{F}$-rcp}.
\end{Pro}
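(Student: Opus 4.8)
The plan is to fix an arbitrary $F=(a_1,\ldots,a_n)\in\mc{F}(P(X))$ together with a monotone $f:(\mb{R}^n_{\geq0},\|.\|_\iy)\ra\mb{R}_{\geq0}$, and to manufacture a restricted $f$-center of $F$ in $Z$ out of a restricted $f$-center of $F$ in the larger subspace $Y+Z$. Since $F\ci P(X)\ci X$, we have $F\in\mc{F}(X)$, so the hypothesis that $(X,Y+Z,\mc{F}(X))$ has $\mr{F}$-rcp applies and furnishes a point $w\in\tr{Cent}_{Y+Z}^f(F)$, that is, $r_f(w,F)=\tr{rad}_{Y+Z}^f(F)$. (Here $Y+Z$ is genuinely a closed subspace: if $y_k+z_k\ra v$ with $y_k\in Y$, $z_k\in Z$, then $z_k=P(y_k+z_k)\ra Pv\in Z$ and hence $y_k\ra v-Pv\in Y$, so $v\in Y+Z$.)

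The crux is to push $w$ into $Z$ by the projection without increasing the relevant distances. Write $w=y+z$ with $y\in Y=\ker(P)$ and $z\in Z$; then $z=Pw$, since $Pw=Py+Pz=z$. Because each $a_i\in P(X)$ satisfies $Pa_i=a_i$, we compute, for every $i$,
\[
P(w-a_i)=Py+Pz-Pa_i=z-a_i,
\]
and therefore, using $\|P\|=1$, $\|z-a_i\|=\|P(w-a_i)\|\leq\|w-a_i\|$ for all $i$. Thus the tuple $(\|z-a_i\|)_{i=1}^n$ is dominated coordinatewise by $(\|w-a_i\|)_{i=1}^n$, and monotonicity of $f$ yields $r_f(z,F)\leq r_f(w,F)$.

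To conclude I would run a squeeze on the radii. On the one hand $z\in Z$ gives $\tr{rad}_Z^f(F)\leq r_f(z,F)$; on the other hand $Z\ci Y+Z$ gives $\tr{rad}_{Y+Z}^f(F)\leq\tr{rad}_Z^f(F)$. Chaining these with the inequality from the previous step,
\[
\tr{rad}_Z^f(F)\leq r_f(z,F)\leq r_f(w,F)=\tr{rad}_{Y+Z}^f(F)\leq\tr{rad}_Z^f(F),
\]
so every inequality is an equality and in particular $r_f(z,F)=\tr{rad}_Z^f(F)$. Hence $z\in\tr{Cent}_Z^f(F)\neq\es$, and since $F$ and $f$ were arbitrary, $(P(X),Z,\mc{F}(P(X)))$ has $\mr{F}$-rcp. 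The argument is essentially mechanical, and closely mirrors the earlier theorem on $\mc{A}-C$-subspaces under a norm-one projection; the only point requiring care is the identity $P(w-a_i)=z-a_i$, which simultaneously kills the $\ker(P)$-component and exploits $Pa_i=a_i$, so that the norm-one contraction and the monotonicity of $f$ combine cleanly. I do not anticipate a genuine obstacle, since no coercivity or strict monotonicity of $f$ is needed: the existence of the center in $Y+Z$ is handed to us directly by the hypothesis.
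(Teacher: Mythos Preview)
Your proof is correct and follows essentially the same route as the paper: both take a restricted $f$-center $w=y_0+z_0\in Y+Z$, project it via $P$ to $z_0\in Z$, and use $\|P\|=1$ together with $Pa=a$ for $a\in P(X)$ and the monotonicity of $f$ to obtain $r_f(z_0,F)\leq r_f(w,F)=\tr{rad}_{Y+Z}^f(F)\leq\tr{rad}_Z^f(F)$. Your write-up is simply more detailed (and your explicit verification that $Y+Z$ is closed is a welcome addition that the paper omits).
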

\begin{proof}
Let $F\in \mc{F}(P(X))$ and $card(F)=n$. Suppose that, $f\in\mr{F}$, where $f:\mb{R}^n_{\geq 0}\ra\mb{R}_{\geq 0}$ be monotone. Based on our assumption, there exists $y_0+z_0\in Y+Z$ such that $r_f(y_0+z_0,F)=\textrm{rad}_{Y+Z}^f(F)\leq\textrm{rad}_Z^f(F)$. 
Then, $r_f(z_0,F)=r_f(P(y_0+z_0),F)
\leq r_f(y_0+z_0,F)
\leq\textrm{rad}_Z^f(F)$.
\end{proof}

\begin{Rem}
\bla
\item An identical arguments stated in the proof of Proposition~\ref{P2} also work to conclude that  $F$ has $\mr{F}$-{\bf rcp} in $Z$ whenever $F\in \mc{B}(P(X))$ and $F$ has $\mr{F}$-{\bf rcp} in $Y+Z$. Here we consider $\mr{F}$ to be the family $\ell_\iy(F)$.
\item Similar conditions as that in Proposition~\ref{P2}, it can be used to prove that if $(X,B_{Y+Z},\mc{B}(X))$ has {\bf rcp}, then $(P(X),B_Z,\mc{B}(P(X)))$ has {\bf rcp}.
\el
\end{Rem}

\end{document}